\newtheorem{theorem}{Theorem}[section]
\newtheorem{lemma}[theorem]{Lemma}
\newtheorem{proposition}[theorem]{Proposition}
\newtheorem{corollary}[theorem]{Corollary}
\newcommand{\comments}[1]{}
\newenvironment{example}[1][Example]{\begin{trivlist}
\item[\hskip \labelsep {\bfseries #1}]}{\end{trivlist}}
\DeclareMathOperator{\Ker}{Ker}
\newcommand{\bbZ}{\mathbb{Z}}
\DeclareMathOperator{\Id}{Id}
\DeclareMathOperator{\Sub}{Sub}
\DeclareMathOperator{\Atp}{Atp}
\DeclareMathOperator{\Aut}{Aut}
\DeclareMathOperator{\Hom}{Hom}
\DeclareMathOperator{\B}{B}
\DeclareMathOperator{\C}{C}
\DeclareMathOperator{\HH}{H}
\DeclareMathOperator{\Span}{Span}
\DeclareMathOperator{\Map}{Map}
\DeclareMathOperator{\Pred}{Pred}
\DeclareMathOperator{\Inv}{Inv}
\title{Enumeration of nilpotent loops up to isotopy}
\author{Lucien Clavier\thanks{We would like to take this opportunity to give our warmest thanks to Petr Vojt\v{e}chovsk\'{y} and Dan Daly for their continued interest and helpful comments concerning the present work.}}
\begin{document}
\maketitle
\begin{abstract}
We modify tools introduced in \cite{Petr} to count, for any odd prime~$q$, the number of nilpotent loops of order $2q$ up to isotopy, instead of isomorphy.
\end{abstract}





\section{Introduction}
\label{intro}

Recall that a set $Q$ equipped with a binary operation $\cdot$ is a \textit{loop} if it possesses a neutral
element and if for each $a$, $b$ in $Q$ there exist unique $x$, $y$ such that
\[ a \cdot x=b \text{ and }y \cdot a=b. \]

As usual, we write these respectively as $ x=a \backslash b \text{ and } y=b/a$.
We abbreviate $x \cdot y$ as $xy$, and adopt the usual convention that multiplication should be performed first between contiguous elements, and then between dotted elements.
For instance, $ xy \cdot z$ is the same as $(x \cdot y) \cdot z$.

Recall that groups are exactly associative loops.
Also, normalized latin squares are exactly multiplication tables of finite loops.

The \textit{center} $Z(Q)$ of a loop $Q$ consists of all elements $x$ in $Q$ such that
\[ xy = yx, \; xy \cdot z = x \cdot yz, \;yx \cdot z = y \cdot xz \text{ and } yz \cdot x = y \cdot zx \]
for every $y$, $z$ in $Q$.

\textit{Normal} subloops are kernels of loop homomorphisms. The center
$Z(Q)$ is a normal subloop of $Q$. The \textit{upper central series} $Z_0(Q) \leq Z_1(Q) \leq \ldots $ is defined
inductively by
\[Z_0(Q) = 1, \; Q/Z_{i+1}(Q) = Z(Q/Z_i(Q)).\]
If $Z_{n-1}(Q) < Z_n(Q) = Q$ for some $n$, we say that $Q$ is \textit{(centrally) nilpotent
of class n}.

A triple $t=(\alpha,\beta,\gamma)$ of bijections between two loops $(L_1, \cdot)$ and $(L_2,\ast)$ is an \textit{isotopism} if
\[ \alpha(x) \ast \beta(y) = \gamma(x \cdot y) \]
for each $x$, $y$ in $L_1$. If such a triple exists, $L_1$ and $L_2$ are said to be \textit{isotopic}.
Isotopy defines a relation of equivalence; if two loops are isomorphic, they must be isotopic (it is the case when we can choose $\alpha=\beta=\gamma$). We write $\cong$ for the relation of isomorphy and $\simeq$ for the relation of isotopy.

An \textit{autotopism} of a loop $L$ is an isotopism from $L$ to $L$. We write $\Atp(L)$ for the set of all autotopisms of a loop $L$; it is a group with respect to the law of composition. 

\mbox{}

We believe the present article is more or less self-contained, but we invite the reader to see \cite{Petr} for any shortcut we may have used.
Also, since both articles have the same scheme, most ideas here will appear more natural to those readers that are already well acquainted with \cite{Petr}.

Here is a summary of the paper, with $A$ an abelian group, $F$ a loop.

Section 2.
This section is identical to Section 2 in \cite{Petr}, and was added for the sake of completeness. 
Namely, central extensions of $A$ by $F$ are in one-to-one correspondence with (normalized) cocycles.
If two cocycles differ by a coboundary, their associated loops are isomorphic.

Section 3.
The group $\Atp(F,A)=\Atp(F)\times \Aut(A)$ acts on $\C(F,A)$ via, for $t=(\alpha,\beta,\gamma)$:
\[
(t,h): \theta \mapsto N(h\theta(\alpha^{-1},\beta^{-1}))
\]
where $N$ is the ``normalizing'' projection defined by
\[
N(m)(x, y) = m(x, y) - m(x, 1) - m(1, y) + m(1, 1) .
\]
This induces an action on $\HH(F,A)$; every orbit under this action consists of cocycles whose associated loops are isotopic. 

Section 4.
For a given cocycle $\theta$, if every central extension of $A$ by $F$ isotopic to the loop $Q(F,A,\theta)$ is in the orbit of $\theta$, we say that $\theta$ is separable.
We provide some conditions under which cocycles are separable.

Section 5.
We define (starred) invariant spaces of subgroups of $\Atp(F,A)$ in the same way as in \cite{Petr}.
Therefore, if every cocycle is separable, we can count the number of central extensions of $A$ by $F$ up to isotopy, as soon as we know the subgroup structure of $\Atp(F,A)$ and the cardinality of the starred invariant space of each subgroup of $\Atp(F,A)$.

Section 6.
We study the case where $A=\bbZ_2$, $F=\bbZ_q$ with $q$ an odd prime.
In that case, we know from \cite{Cla} the subgroup structure of $\Atp(F)$ (see Subsection 6.1).
Thus, we only have left to compute the invariant (resp. starred invariant) spaces of such subgroups. This is done in Subsection 6.2 (resp. 6.3).

Subsequently, we can compute the number $\widetilde{\mathcal{N}}(2q)$ of nilpotent loops of order $2q$ up to isotopy (Theorem 6.10), and describe the asymptotic growth of $\widetilde{\mathcal{N}}(2q)$ (Corollary 6.11).

Section 7. 
We provide some ideas related to the present work. See also Section 10 in \cite{Petr}.


\section{Central extensions, cocycles and coboundaries}
\label{centr}
Let $A$ be an abelian group and $F$ a loop.
A loop $Q$ is a \textit{central extension of $A$ by $F$} if $A \leq Z(Q)$ and $Q/A \cong F$.

A mapping $\theta : F \times F \rightarrow A$ is a \textit{(normalized) cocycle} if it satisfies for every $x \in F$
\[ 
\theta(1, x) = \theta(x, 1) = 0 .
\]
For a cocycle $\theta$, define $Q(F, A, \theta)$ to be $F \times A$ equipped with the multiplication:
\[
(x, a)(y, b) = \left(xy, a + b + \theta(x, y)\right).
\]

The following characterization of central loop extensions is folklore, and is in
complete analogy with the associative case:
\begin{theorem} The loop $Q$ is a central extension of $A$ by $F$ if and only if there is a
cocycle $\theta$ such that $Q \cong Q(F, A, \theta)$.
\end{theorem}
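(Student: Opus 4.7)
I would prove the two directions separately. The ``if'' direction is a direct verification from the definition of $Q(F,A,\theta)$; the ``only if'' direction requires choosing a section of the quotient map and reading off the cocycle from it.

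\emph{The ``if'' direction.} Given a normalized cocycle $\theta$, set $Q = Q(F,A,\theta)$. The normalization $\theta(1,x) = \theta(x,1) = 0$ makes $(1,0)$ a two-sided identity. The equation $(x,a)(y,b) = (u,v)$ forces $y = x\backslash u$ and then $b = v - a - \theta(x,y)$ (and the right-division is analogous), so $Q$ is a loop. Identifying $A$ with $\{1\}\times A \subseteq Q$, I would check the four centrality identities of the introduction for any $(1,a)$: each reduces, via normalization of $\theta$ and commutativity/associativity of $A$, to a trivial equality in $A$. Finally, the projection $(x,a) \mapsto x$ is a loop epimorphism $Q \to F$ with kernel exactly $\{1\}\times A$, giving $Q/A \cong F$.

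\emph{The ``only if'' direction.} Given a central extension $Q$ of $A$ by $F$, I would fix an isomorphism $Q/A \cong F$ and a section $s: F \to Q$ of the quotient map with $s(1) = 1_Q$. Since $s(x)s(y)$ and $s(xy)$ lie in the same $A$-coset, there is a unique $\theta(x,y) \in A$ with
\[
s(x)\,s(y) \;=\; s(xy)\cdot\theta(x,y),
\]
and $\theta$ is normalized because $s(1) = 1_Q$. I would then define $\phi: Q(F,A,\theta) \to Q$ by $\phi(x,a) = s(x)\cdot a$; this is a bijection (every element of $Q$ is uniquely of the form $s(x)\cdot a$ for some $x \in F$, $a \in A$), and the verification that $\phi$ preserves multiplication unwinds as
\[
\bigl(s(x)\cdot a\bigr)\bigl(s(y)\cdot b\bigr) \;=\; s(x)s(y)\cdot(a+b) \;=\; s(xy)\cdot\theta(x,y)\cdot(a+b) \;=\; \phi\bigl(xy,\,a+b+\theta(x,y)\bigr),
\]
using centrality of $A$ at each reassociation.

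\emph{Main obstacle.} The only genuine subtlety is that $Q$ is not associative in general, so each rearrangement in the computation for $\phi$ must be justified by a specific instance of one of the four centrality identities, and each such instance must involve at least one central factor. Once these justifications are written out in the correct order, the computation goes through. A minor additional care is needed in showing that $\phi$ is well-defined and bijective, where one must argue that the decomposition $q = s(\pi(q))\cdot(\text{element of }A)$ is unique; this too follows from $A$ being a subloop and $s$ being a section.
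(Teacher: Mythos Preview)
The paper does not actually prove this theorem: it introduces it as ``folklore, and in complete analogy with the associative case'' and moves on immediately. So there is no argument in the paper to compare against.

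Your proposed proof is the standard one and is correct. The ``if'' direction is routine, and in the ``only if'' direction you have correctly identified both the construction (choose a set-theoretic section $s$ with $s(1)=1_Q$, define $\theta$ by $s(x)s(y)=s(xy)\cdot\theta(x,y)$, and set $\phi(x,a)=s(x)\cdot a$) and the only real issue, namely that every rebracketing in the chain
\[
(s(x)\cdot a)(s(y)\cdot b)=\bigl((s(x)s(y))\cdot a\bigr)\cdot b=s(xy)\cdot\bigl(\theta(x,y)+a+b\bigr)
\]
must be justified by one of the four centrality identities applied to a factor lying in $A$. If you want the write-up to be airtight, spell out explicitly which identity is used at each step (e.g.\ $u(s(y)\cdot b)=(u\cdot s(y))\cdot b$ uses $yz\cdot x=y\cdot zx$ with $x=b$ central, then $(s(x)\cdot a)\cdot s(y)=s(x)\cdot(a\cdot s(y))$ uses $yx\cdot z=y\cdot xz$ with $x=a$ central, and so on); none of them fails. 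The uniqueness of the decomposition $q=s(\pi(q))\cdot a$ follows since $s(\pi(q))\cdot a=s(\pi(q))\cdot a'$ forces $a=a'$ by left cancellation in the loop $Q$.
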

The cocycles form an abelian group $\C(F,A)$ with respect to the natural addition; when $A$ is a field, $\C(F,A)$ is a vector space over $A$ with the natural scalar multiplication. 

Define
\begin{align*}
&\Map_0(F,A) = \{ \tau : F \rightarrow A;\; \tau (1) = 0\},  \\ 
&\Hom(F,A) = \{\tau : F \rightarrow A ;\; \tau \text{ is a homomorphism of loops}\}.
\end{align*}

\begin{lemma}
The mapping $\, \widehat{\text{ }} : \Map_0(F,A) \rightarrow \C(F,A), \tau \mapsto \widehat{\tau}$ defined by
\[
\widehat{\tau}(x, y) = \tau (xy) - \tau (x) -\tau (y)
\]
is a group homomorphism with kernel $\Hom(F,A)$.
\end{lemma}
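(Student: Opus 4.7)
The plan is to unwind three essentially tautological checks: that $\widehat{\tau}$ really lies in $\C(F,A)$ for every $\tau \in \Map_0(F,A)$, so the map is well defined; that the assignment respects pointwise addition; and that the kernel is exactly $\Hom(F,A)$.

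For the first check, I would substitute $(1,x)$ and $(x,1)$ into the defining formula $\widehat{\tau}(x,y) = \tau(xy) - \tau(x) - \tau(y)$ and use $\tau(1) = 0$ to see that both values collapse to $0$; hence $\widehat{\tau}$ is normalized and belongs to $\C(F,A)$. For the second, the assignment $\tau \mapsto \tau(xy) - \tau(x) - \tau(y)$ is visibly linear in $\tau$ for each fixed pair $(x,y)$, since $A$ is abelian; this gives $\widehat{\tau_1 + \tau_2} = \widehat{\tau_1} + \widehat{\tau_2}$ directly, once one notes that the sum of two elements of $\Map_0(F,A)$ still vanishes at $1$ and so stays in $\Map_0(F,A)$. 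For the third, I would read off that $\widehat{\tau} = 0$ is exactly the equation $\tau(xy) = \tau(x) + \tau(y)$ for all $x,y \in F$, which combined with $\tau(1) = 0$ is precisely the definition of a loop homomorphism $F \to A$; conversely, any loop homomorphism $F \to A$ automatically satisfies $\tau(1) = 0$ (specialize $x = y = 1$ and cancel in $A$), so it belongs to $\Map_0(F,A)$ and hence to the kernel.

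No step is a real obstacle; the whole statement is an accounting exercise using only the additive group structure of $A$ and the definition of a loop homomorphism. The only mildly delicate point to flag is the inclusion $\Hom(F,A) \subseteq \Map_0(F,A)$, which is what makes it meaningful to speak of $\Hom(F,A)$ as the ``kernel'' in the statement.
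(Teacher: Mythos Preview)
Your proposal is correct and complete; the paper itself does not supply a proof of this lemma, treating it as an elementary observation, and your three-step verification (well-definedness, additivity, identification of the kernel) is exactly the natural route one would take. Your remark that $\Hom(F,A)\subseteq\Map_0(F,A)$ is a nice touch the paper leaves implicit.
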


The image 
\[
\B(F,A) = \widehat{\C(F,A)} \cong \Map_0(F,A)/\Hom(F,A)
\]
is a subgroup (subspace) of $\C(F,A)$; its elements are referred to as \textit{coboundaries}.
Coboundaries play a prominent role in classifications due to this simple observation:
\begin{lemma}
\label{isomorphic}
Let $\widehat{\tau} \in \B(F,A)$. Then $f : Q(F, A, \theta) \rightarrow Q(F, A, \theta + \widehat{\tau})$ defined by
\[f(x, a) = (x, a + \tau(x))\]
is an isomorphism of loops.
\end{lemma}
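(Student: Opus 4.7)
The plan is a direct verification: $f$ is manifestly a bijection of the underlying set $F\times A$ (it is the identity on the first coordinate and the translation $a \mapsto a+\tau(x)$ on the second, which is invertible in the abelian group $A$), so the only real content is checking that $f$ intertwines the two multiplications.

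Concretely, I would fix $(x,a),(y,b)\in F\times A$ and compute both sides of the putative homomorphism identity. On one hand, using the product in $Q(F,A,\theta)$ and then applying $f$:
\[
f\bigl((x,a)(y,b)\bigr) \;=\; f\bigl(xy,\,a+b+\theta(x,y)\bigr) \;=\; \bigl(xy,\,a+b+\theta(x,y)+\tau(xy)\bigr).
\]
On the other hand, applying $f$ first and then multiplying in $Q(F,A,\theta+\widehat{\tau})$:
\[
f(x,a)\cdot f(y,b) \;=\; \bigl(x,\,a+\tau(x)\bigr)\bigl(y,\,b+\tau(y)\bigr) \;=\; \bigl(xy,\,a+\tau(x)+b+\tau(y)+\theta(x,y)+\widehat{\tau}(x,y)\bigr).
\]
Substituting the definition $\widehat{\tau}(x,y)=\tau(xy)-\tau(x)-\tau(y)$ collapses the right-hand side to exactly the same expression as the left-hand side.

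There is no real obstacle here: the lemma is essentially the statement that the ambiguity $\tau(xy)-\tau(x)-\tau(y)$ built into the definition of a coboundary is precisely what must appear when one re-coordinatizes the central extension by shifting the $A$-component along a set-theoretic section $x\mapsto(x,\tau(x))$. The normalization condition $\tau(1)=0$ is not needed for the multiplicative check itself, but it is what guarantees that $\widehat{\tau}$ lives in $\C(F,A)$ (so the codomain $Q(F,A,\theta+\widehat{\tau})$ is indeed defined via a normalized cocycle), and it ensures $f(1,0)=(1,0)$ so that $f$ sends the neutral element to the neutral element.
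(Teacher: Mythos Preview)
Your proof is correct; the paper itself does not supply a proof of this lemma (it is stated as a standard observation), and your direct verification that $f$ is bijective and intertwines the two multiplications is exactly the expected argument.
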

Thus, it is sufficient to consider cocycles modulo coboundaries, and we define the \textit{second cohomology} 
\[\HH(F,A)=\C(F,A)/\B(F,A).\]


\section{Action of Autotopism groups}
\label{action}

Following \cite{Petr}, we are going to define an action of $\Atp(F,A)$ on $\C(F,A)$ and $\HH(F,A)$.
For any cocycle $\theta$ and any autotopism $t=(\alpha, \beta, \gamma)$ of $F$, we would like to define something like the map 
\[
(x,y) \mapsto \theta(\alpha^{-1}(x),\beta^{-1}(y))
\]
but this is usually not a normalized cocycle. 

Instead, let $N$ be the function defined for any $m : F \times F \rightarrow A$ by
\[
N(m)(x, y) = m(x, y) - m(x, 1) - m(1, y) + m(1, 1).
\]

Notice that $N(m)$ is always a cocycle, and that $N$ restricted to $\C(F,A)$ is the
identity map; thus, when $A$ is a field, $N$ is a projection from $\Map(F \times F, A)$ onto $\C(F,A)$.

Now, let 
\[
\Atp(F,A) = \Atp(F) \times \Aut(A).
\]
Write for every $t = (\alpha, \beta, \gamma) \in \Atp(F) $ and every $h \in \Aut(A)$
\[
 ^{(t,h)} \theta = N(h\theta(\alpha^{-1},\beta^{-1})).
\]
By convention, $\theta(\alpha^{-1},\beta^{-1})$ stands for the element of $\Map(F \times F, A)$ defined by $\theta(\alpha^{-1},\beta^{-1})(x,y)=\theta(\alpha^{-1}x,\beta^{-1}y)$.

\begin{lemma}
The group $\Atp(F,A)$ acts on $\C(F,A)$ via
\[
(t,h)\cdot \theta = ^{(t,h)} \theta
\]
\end{lemma}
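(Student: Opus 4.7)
The plan is to verify the two axioms of a group action: that the identity of $\Atp(F,A)$ fixes every cocycle, and that composition is respected. Normalisation into $\C(F,A)$ is automatic from the remark preceding the lemma, which says $N$ maps $\Map(F\times F,A)$ onto $\C(F,A)$.

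For the identity, the trivial autotopism is $(\Id_F,\Id_F,\Id_F)$ and the trivial automorphism is $\Id_A$. Then $\Id_A\circ\theta(\Id_F^{-1},\Id_F^{-1}) = \theta$ which already lies in $\C(F,A)$, so applying $N$ returns $\theta$ since $N$ restricted to $\C(F,A)$ is the identity.

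The core of the proof is compatibility: writing $t_i=(\alpha_i,\beta_i,\gamma_i)$, I need
\[
N\bigl(h_1\, [\,N(h_2\theta(\alpha_2^{-1},\beta_2^{-1}))\,](\alpha_1^{-1},\beta_1^{-1})\bigr)
=
N\bigl(h_1h_2\,\theta((\alpha_1\alpha_2)^{-1},(\beta_1\beta_2)^{-1})\bigr).
\]
The outer manipulations are easy: since $h_1$ is an additive automorphism it commutes with $N$ and with precomposition by $(\alpha_1^{-1},\beta_1^{-1})$, and a direct check gives $\theta(\alpha_2^{-1}\alpha_1^{-1},\beta_2^{-1}\beta_1^{-1})(x,y) = \theta(\alpha_2^{-1},\beta_2^{-1})(\alpha_1^{-1}x,\beta_1^{-1}y)$, so the only real content is to show that for any $m : F\times F \to A$ and any bijections $\alpha,\beta$ of $F$,
\[
N\bigl(N(m)(\alpha^{-1},\beta^{-1})\bigr) = N\bigl(m(\alpha^{-1},\beta^{-1})\bigr).
\]
This reduces, by linearity of $N$ and of precomposition, to showing $N$ annihilates the ``defect'' $(N(m)-m)(x,y) = -m(x,1)-m(1,y)+m(1,1)$ after precomposition. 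But any function of the form $(x,y)\mapsto \phi(x)+\psi(y)+c$ is killed by $N$: the four terms in the definition of $N$ telescope to zero. Since precomposition by $(\alpha^{-1},\beta^{-1})$ preserves this separable shape, the defect is still annihilated, and the equality follows.

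The only mild obstacle is the bookkeeping of the order of composition inside $\Atp(F)$ (verifying that precomposition by $(\alpha_1^{-1},\beta_1^{-1})$ then by $(\alpha_2^{-1},\beta_2^{-1})$ matches precomposition by $((\alpha_1\alpha_2)^{-1},(\beta_1\beta_2)^{-1})$, which relies on $\Atp(F)$ acting by function composition in the expected order); everything else is a routine application of the additivity of $N$ and of $h$, and of the separable-kernel observation above.
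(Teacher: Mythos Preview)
Your argument is correct, and it is genuinely different from the paper's. The paper verifies associativity by brute force: it expands ${}^{(t_1,h_1)}\!\left({}^{(t_2,h_2)}\theta\right)(x,y)$ into sixteen terms and cancels twelve of them to recognise ${}^{(t_1t_2,\,h_1h_2)}\theta(x,y)$. You instead isolate the conceptual reason the cancellation works: the kernel of $N$ is exactly the space of ``separable'' maps $(x,y)\mapsto\phi(x)+\psi(y)+c$, and precomposition by any pair of bijections preserves that space, so the inner $N$ can be dropped before applying the outer one. The paper's own commentary (immediately after its proof) concedes that the computation makes the result look like a ``lucky coincidence''; your observation that $\Ker N$ is stable under precomposition is precisely the structural fact that removes the coincidence. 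The price you pay is that a reader must trust the routine checks (that $h$ commutes with $N$ and with precomposition, and that the separable form is preserved), whereas the paper's sixteen-term display is self-contained if tedious. One small omission: the paper also records the linearity ${}^{(t,h)}(\theta_1+\theta_2)={}^{(t,h)}\theta_1+{}^{(t,h)}\theta_2$, which is used later; it is immediate from the additivity of $N$, $h$, and precomposition, but you may want to mention it.
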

\begin{proof}
The proof is straightforward. Nevertheless, we would like to prove associativity here, considering the following computation to be non-trivial from the formal point of view.
For all $(t_1,h_1),(t_2,h_2) \in \Atp(F,A)$, $\theta \in \C(F,A)$ and $x,y\in F$,
$^{(t_1,h_1)}\left(^{(t_2,h_2)} \theta\right)(x,y)$ decomposes into 16 terms.
Namely, it equals after unpacking $^{(t_1,h_1)}\left(^{(t_2,h_2)} \theta\right)$ into $^{(t_1,h_1)}\left((x,y)\mapsto \left( ^{(t_2,h_2)} \theta\right) (x,y)\right)$: 
\begin{small}
\[ \begin{array}{cccc}
   & h_1 h_2 \theta\left(\alpha_2^{-1} \alpha_1^{-1}\left(x\right),\beta_2^{-1} \beta_1^{-1}\left(y\right)\right)	& 
- & h_1 h_2 \theta\left(\alpha_2^{-1} \alpha_1^{-1}\left(x\right),\beta_2^{-1}\left(1\right)\right) \\
 - & h_1 h_2 \theta\left(\alpha_2^{-1}\left(1\right),\beta_2^{-1} \beta_1^{-1}\left(y\right)\right) & 
+ & h_1 h_2 \theta\left(\alpha_2^{-1}\left(1\right),\beta_2^{-1}\left(1\right)\right)   \\

-  & h_1 h_2 \theta\left(\alpha_2^{-1} \alpha_1^{-1}\left(x\right),\beta_2^{-1} \beta_1^{-1}\left(1\right)\right)	& 
+ & h_1 h_2 \theta\left(\alpha_2^{-1} \alpha_1^{-1}\left(x\right),\beta_2^{-1}\left(1\right)\right) \\
 + & h_1 h_2 \theta\left(\alpha_2^{-1}\left(1\right),\beta_2^{-1} \beta_1^{-1}\left(1\right)\right) & 
- & h_1 h_2 \theta\left(\alpha_2^{-1}\left(1\right),\beta_2^{-1}\left(1\right)\right)   \\

 -  & h_1 h_2 \theta\left(\alpha_2^{-1} \alpha_1^{-1}\left(1\right),\beta_2^{-1} \beta_1^{-1}\left(y\right)\right)	& 
+ & h_1 h_2 \theta\left(\alpha_2^{-1} \alpha_1^{-1}\left(1\right),\beta_2^{-1}\left(1\right)\right) \\
 + & h_1 h_2 \theta\left(\alpha_2^{-1}\left(1\right),\beta_2^{-1} \beta_1^{-1}\left(y\right)\right) & 
- & h_1 h_2 \theta\left(\alpha_2^{-1}\left(1\right),\beta_2^{-1}\left(1\right)\right)   \\

 +  & h_1 h_2 \theta\left(\alpha_2^{-1} \alpha_1^{-1}\left(1\right),\beta_2^{-1} \beta_1^{-1}\left(1\right)\right)	& 
- & h_1 h_2 \theta\left(\alpha_2^{-1} \alpha_1^{-1}\left(1\right),\beta_2^{-1}\left(1\right)\right) \\
 - & h_1 h_2 \theta\left(\alpha_2^{-1}\left(1\right),\beta_2^{-1} \beta_1^{-1}\left(1\right)\right) & 
+ & h_1 h_2 \theta\left(\alpha_2^{-1}\left(1\right),\beta_2^{-1}\left(1\right)\right)   \\

\end{array} \] 
\end{small}
which becomes after cancellation:
\begin{small}
\[ \begin{array}{c}
h_1 h_2 \theta\left(\alpha_2^{-1} \alpha_1^{-1}\left(x\right),\beta_2^{-1} \beta_1^{-1}\left(y\right)\right)
-  h_1 h_2 \theta\left(\alpha_2^{-1} \alpha_1^{-1}\left(x\right),\beta_2^{-1} \beta_1^{-1}\left(1\right)\right)	\\
 -   h_1 h_2 \theta\left(\alpha_2^{-1} \alpha_1^{-1}\left(1\right),\beta_2^{-1} \beta_1^{-1}\left(y\right)\right)	 
 +   h_1 h_2 \theta\left(\alpha_2^{-1} \alpha_1^{-1}\left(1\right),\beta_2^{-1} \beta_1^{-1}\left(1\right)\right)	 \\
\end{array} \] 
\end{small}
We recognize $\left(^{(t_1t_2,h_1h_2)} \theta\right)(x,y)$, and we are done.
It is also easy to check that $^{(t,h)}(\theta_1 + \theta_2) =^{(t,h)} \theta_1 +^{(t,h)} \theta_2$.
\end{proof}

We provided this heavy computation to emphasize that, at this point, the reason why $N$ gives rise to an action of $\Atp(F,A)$ on $\B(F,A)$ seems to lie on a lucky coincidence.
$N$ is actually far more that just a naively-defined projection, and we will see in the proof of Theorem \ref{theorem} that it expresses well the relation between central extensions and their principal isotopes.

Moreover, it is easy to check that
\[ 
^{(t,h)}\widehat{\tau} = \widehat{\tau'}
\]
where $\tau' \in \Map_0$ is defined by 
\[
\tau' (x) = h \tau \gamma^{-1} (x) - h \tau \gamma^{-1} (1).
\]

Therefore, the action of $\Atp(F,A)$ on $\C(F,A)$ induces an action on $\B(F,A)$ and $\HH(F,A)$.

\mbox{}

The following lemma asserts that any orbit for the action of $\Atp(F,A)$ is constituted of loops with the same isotopism type.
\begin{lemma}
\label{isotopic}
For any $t = (\alpha, \beta, \gamma)\in\Atp(F)$, $h\in\Aut(A)$, the triple $\overline{t} = (\overline{\alpha},\overline{\beta},\overline{\gamma})$ defined by

\[
 \begin{cases}
\overline{\alpha}(x,a) = \left(\alpha (x), ha + h\theta(x, \beta^{-1}(1))\right) \\
\overline{\beta}(y, b) = \left(\beta (y), hb + h\theta(\alpha^{-1}(1), y)\right) \\
\overline{\gamma}(z, c) = \left(\gamma (z), hc + h\theta(\alpha^{-1}(1),\beta^{-1}(1))\right)
\end{cases}
\]

is an isotopism from $Q(F, A, \theta)$ to $Q(F, A, ^{(t,h)}\theta)$.
\end{lemma}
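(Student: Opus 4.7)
The plan is a direct verification of the isotopism identity $\overline{\alpha}(x,a) \ast \overline{\beta}(y,b) = \overline{\gamma}\bigl((x,a)(y,b)\bigr)$, where $(x,a)(y,b)$ refers to multiplication in $Q(F,A,\theta)$ and $\ast$ refers to multiplication in $Q(F,A,{}^{(t,h)}\theta)$.

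First, I would remark that each of $\overline{\alpha},\overline{\beta},\overline{\gamma}$ is obviously a bijection of $F\times A$: the first coordinate is a bijection of $F$ (since $\alpha,\beta,\gamma$ are bijections of $F$), and for fixed first coordinate the second coordinate is an affine map in $a$ (respectively $b$, $c$) with slope $h \in \Aut(A)$, hence invertible.

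Next I would compute both sides. On the first coordinate the identity reduces to $\alpha(x)\beta(y) = \gamma(xy)$, which is exactly the statement that $t=(\alpha,\beta,\gamma)$ is an autotopism of $F$. On the second coordinate, the left-hand side unpacks as
\[
ha + h\theta(x,\beta^{-1}(1)) + hb + h\theta(\alpha^{-1}(1),y) + {}^{(t,h)}\theta\bigl(\alpha(x),\beta(y)\bigr),
\]
while the right-hand side equals
\[
h\bigl(a+b+\theta(x,y)\bigr) + h\theta\bigl(\alpha^{-1}(1),\beta^{-1}(1)\bigr).
\]
Expanding ${}^{(t,h)}\theta\bigl(\alpha(x),\beta(y)\bigr)$ using the definition of $N$ gives
\[
h\theta(x,y) - h\theta(x,\beta^{-1}(1)) - h\theta(\alpha^{-1}(1),y) + h\theta(\alpha^{-1}(1),\beta^{-1}(1)),
\]
so the correction terms $h\theta(x,\beta^{-1}(1))$ and $h\theta(\alpha^{-1}(1),y)$ that were inserted into $\overline{\alpha}$ and $\overline{\beta}$ cancel exactly the corresponding subtractions coming from $N$. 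What remains is $ha + hb + h\theta(x,y) + h\theta(\alpha^{-1}(1),\beta^{-1}(1))$, matching the right-hand side.

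I do not expect any serious obstacle here; the computation is straightforward once one writes it out. The only conceptual point worth stressing is that the shifts by $h\theta(x,\beta^{-1}(1))$, $h\theta(\alpha^{-1}(1),y)$, and $h\theta(\alpha^{-1}(1),\beta^{-1}(1))$ in the three components of $\overline{t}$ are precisely those needed to absorb the three ``normalizing'' boundary terms introduced by $N$. This is the sense in which, as hinted right before the statement, $N$ is not a naive projection but expresses the natural relation between a central extension and its principal isotopes.
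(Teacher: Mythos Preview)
Your proof is correct and follows essentially the same direct verification as the paper: both compute $\overline{\alpha}(x,a)\ast\overline{\beta}(y,b)$, expand ${}^{(t,h)}\theta(\alpha(x),\beta(y))$ via the definition of $N$, observe the cancellations, and match the result with $\overline{\gamma}((x,a)\cdot_\theta(y,b))$. Your additional remark on bijectivity of $\overline{\alpha},\overline{\beta},\overline{\gamma}$ is a small point the paper leaves implicit, but otherwise the arguments coincide.
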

\begin{proof}
Let $\cdot_\theta$ be the multiplication in $Q(F, A, \theta)$ and $\cdot_{^{(t,h)}\theta}$ the multiplication in $Q(F, A, ^{(t,h)}\theta)$. Then
\begin{align*}
\overline{\alpha}(x,a) \cdot_{^{(t,h)}\theta} \overline{\beta}(y, b) = \; & \left(\alpha (x), ha + h\theta(x, \beta^{-1}(1))\right)\\
&\cdot_{^{(t,h)}\theta} \left(\beta (y), hb + h\theta(\alpha^{-1}(1), y)\right) \\
= \;&\big(\alpha (x) \beta (y), ha+hb+h\theta(x, \beta^{-1}(1))+ h\theta(\alpha^{-1}(1), y) \\ 
 &\;+ N(h\theta(\alpha^{-1},\beta^{-1}))(\alpha (x),\beta (y))\big) \\
=\; &\left(\gamma(xy), ha+hb + h \theta(x,y) +h\theta(\alpha^{-1}(1), \beta^{-1}(1))\right) \\
=\; &\overline{\gamma}(xy, a+b + \theta(x,y)) \\
=\; & \overline{\gamma}((x,a)\cdot_\theta (y,b)).
\end{align*}
\end{proof}

\comments{

We finish this section by a condition for cocycles to have principal-isotopic associated loops.

A \textit{principal isotopism} between two loops $L_1$ and $L_2$ is an isotopism of the form $(\alpha, \beta, \Id)$ where $\Id$ is the identity on $L_2$. 

\begin{example}
\label{example}

Let $\theta \in C$. Suppose there is a principal isotopism between $Q_\theta$ and some other element of $Q_C$, say $Q_\mu$ (we say that $Q_\mu$ is a \textit{principal isotope} of $Q_\theta$).
Let $\cdot$ be the multiplication in $Q_\theta$ and $\ast$ the one in $Q_\mu$. Then we have for each $x$, $y$ in $F$ and each $a$, $b$ in $A$:
\begin{equation}
\label{1}
 \alpha(x,a) \ast \beta(y,b)=(x,a)\cdot(y,b).
\end{equation}
In particular for each $x$ in $F$ and each $a$ in $A$
\[ \alpha(x,a) \ast \beta(1,0)=(x,a). \]
Thus, setting $(x_0,a_0)=\beta(1,0)$ we have
\begin{equation}
\label{2}
\alpha(x,a)=(x/x_0,a-a_0-\mu(x/x_0,x_0)).
\end{equation}
Similarly for $\beta$, we set $(y_0,b_0)=\alpha(1,0)$ and get
\begin{equation}
\label{3}
\beta(y,b)=(y_0 \backslash y, b-b_0 -\mu(y_0,y_0 \backslash y)).
\end{equation}
Combining \eqref{1}, \eqref{2} and \eqref{3}, we see that for each $x$, $y$ in $F$ and each $a$, $b$ in $A$:
\begin{align}
\label{4}
(x/x_0.y_0 \backslash y, a+b-a_0-b_0 -\mu(x/x_0,x_0)-\mu(y_0,y_0 \backslash y) + \mu(x/x_0,y_0 \backslash y))= \notag\\
(xy,a+b+\theta(x,y)).
\end{align}
Now, putting $(x,a)=(y,b)=(1,0)$ into \eqref{1} gives 
\[(y_0,b_0)\ast(x_0,a_0)=(y_0 x_0, a_0+b_0 +\mu(y_0,x_0))=(1,0)\]
so that $x_0=y_0 \backslash 1$, $y_0=1/x_0$ and $-a_0-b_0=\mu(y_0,x_0)= \mu(1/x_0,y_0 \backslash 1)$.
\newline 
Putting this back in \eqref{4} yields $\theta=N(\mu(\alpha_0,\beta_0))$ where $t_0=(\alpha_0,\beta_0,\gamma_0)$ is the principal autotopism of $F$ defined by 
\begin{align*}
\alpha_0 x&=x/x_0 \\
\beta_0 y &= y_0 \backslash y \\
\gamma_0 z&=z.
\end{align*}
This is equivalent to $\mu=N(\theta(\alpha_0^{-1},\beta_0^{-1}))$ (easy calculation)
\newline
But since $t_0$ is the general form of a principal autotopism of $F$, we can conclude: the set of principal isotopes of $Q_\theta$ in $Q_C$ is the set of all 
\[N(\theta(\alpha^{-1},\beta^{-1})); \; (\alpha, \beta, \Id) \in \Atp(F). \]
\end{example}
}


\section{Separability}
\label{sep}

As in \cite{Petr}, we define isotopy separability in the following way:

Write $\theta \sim \mu$ if $\mu =^{(t,h)} \theta + \tau$ for some $(t, h)\in\Atp(F,A)$, $\tau\in\B(F,A)$. 
$\sim$ is an equivalence relation on $\C(F,A)$, and by Lemmas \ref{isomorphic} and \ref{isotopic}, if $\theta \sim \tau$, then
$Q(F,A,\theta) \simeq Q(F,A,\mu)$. We say that $\theta$ is \textit{(isotopy) separable} if the converse also holds, i.e. if whenever $Q(F,A,\theta) \simeq Q(F,A,\mu)$ for some cocycle $\mu$, we also have $\theta \sim \mu$.

\begin{theorem}
\label{theorem}
Let $\theta \in \C(F,A)$. Set $Q_\theta=Q(F,A,\theta)$.
If $\Aut( Q_\theta)$ acts transitively on 
\[
\{K \leq Z( Q_\theta);\;K \cong A,\,  Q_\theta/K \simeq F \}
\]
then $\theta$ is isotopy separable.
\end{theorem}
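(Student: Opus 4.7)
The plan is to adapt the isomorphism-separability argument from \cite{Petr}, with a preliminary reduction from isotopism to isomorphism onto a principal isotope. Concretely, from any isotopism $\psi\colon Q_\theta\to Q_\mu$ I will manufacture a subloop $K'\in\mathcal{K}:=\{K\le Z(Q_\theta);\,K\cong A,\,Q_\theta/K\simeq F\}$, use the transitivity hypothesis to move $K'$ onto $K_\theta:=\{1\}\times A$, and read off from the resulting isomorphism both an autotopism $(t,h)\in\Atp(F,A)$ and a map $\tau\in\Map_0(F,A)$ with $\mu={}^{(t,h)}\theta+\widehat{\tau}$.

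Writing $\psi=(\alpha,\beta,\gamma)$, set $p:=\alpha(1,0)$ and $q:=\beta(1,0)$ and consider the principal isotope $Q_\mu^{(p,q)}$ of $Q_\mu$ with multiplication $u*v:=(u/_\mu q)\cdot_\mu(p\backslash_\mu v)$. Plugging $(1,0)$ alternately into one argument of the isotopism identity forces $\alpha(x,a)=\gamma(x,a)/_\mu q$ and $\beta(y,b)=p\backslash_\mu\gamma(y,b)$, so the identity rearranges into $\gamma(x,a)*\gamma(y,b)=\gamma((x,a)\cdot_\theta(y,b))$, which means $\gamma\colon Q_\theta\to Q_\mu^{(p,q)}$ is a loop isomorphism. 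With $p=(p_1,p_2)$, $q=(q_1,q_2)$, the first-coordinate projection realises $Q_\mu^{(p,q)}$ as a central extension of $A$ by the principal isotope $F^{(p_1,q_1)}\simeq F$, with kernel $K:=\{p_1q_1\}\times A\le Z(Q_\mu^{(p,q)})$. Hence $\gamma^{-1}(K)\in\mathcal{K}$, and transitivity provides $\sigma\in\Aut(Q_\theta)$ with $\sigma(\gamma^{-1}(K))=K_\theta$; set $\eta:=\gamma\sigma^{-1}$, a loop isomorphism $Q_\theta\to Q_\mu^{(p,q)}$ sending $K_\theta$ onto $K$.

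Because $\eta(K_\theta)=K$, $\eta$ descends to a loop isomorphism $\bar\eta\colon F\to F^{(p_1,q_1)}$, equivalently to the autotopism $t:=(\bar\eta/q_1,\,p_1\backslash\bar\eta,\,\bar\eta)$ of $F$; the restriction $\eta|_{K_\theta}$ yields the desired $h\in\Aut(A)$; and using that $K_\theta$ acts on $Q_\theta$ by right multiplication as $(x,a)\mapsto(x,a+c)$ one deduces $\eta(x,a)=(\bar\eta(x),\,h(a)+g(x))$, where $g(x):=\eta_2(x,0)$. Expanding the isomorphism condition $\eta((x,a)\cdot_\theta(y,b))=\eta(x,a)*\eta(y,b)$ on the second coordinate gives a single master equation relating $h\theta$, $g$ and $\mu$.

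Evaluating this master equation at $(t_1^{-1}(x),t_2^{-1}(y))$ and at the three edge pairs obtained by replacing one or both arguments with $1$, then assembling the alternating sum that defines ${}^{(t,h)}\theta=N(h\theta(t_1^{-1},t_2^{-1}))$, the $\mu$-corrections coming from the two principal-isotope structures should telescope to the single term $\mu(x,y)$, while the $g$-terms collapse into $\widehat{\tau}(x,y)$ for $\tau(x):=g(\bar\eta^{-1}(x))-g(\bar\eta^{-1}(1))\in\Map_0(F,A)$. This yields $\mu={}^{(t,h)}\theta+\widehat{\tau}$, that is $\theta\sim\mu$. The main obstacle is precisely this last telescoping: tracking the principal-isotope corrections arising from both $Q_\mu^{(p,q)}$ and $F^{(p_1,q_1)}$, and verifying that the normalising operator $N$ is exactly what collapses them into a coboundary.
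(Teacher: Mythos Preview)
Your approach is essentially identical to the paper's: factor the isotopism as an isomorphism $\gamma$ onto a principal isotope of $Q_\mu$, identify the central copy of $A$ sitting over the neutral element there, use transitivity to align it with $\{1\}\times A$, and then decompose the resulting isomorphism into $h\in\Aut(A)$, a bijection $\bar\eta$ on $F$, and a map in $\Map_0(F,A)$. The telescoping you flag as the main obstacle does go through exactly as you hope---the paper carries it out explicitly and arrives at $h\theta+\widehat{\tau}=N(\mu(\widetilde{\alpha},\widetilde{\beta}))$ for the autotopism $\widetilde{t}=(\bar\eta(\cdot)/q_1,\;p_1\backslash\bar\eta(\cdot),\;\bar\eta)$ of $F$ that you wrote down, which is equivalent to $\theta\sim\mu$.
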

\begin{proof}
Let $t=(\alpha,\beta,\gamma)$ be an isotopism between $ Q_\theta$ and $Q_\mu=Q(F,A,\mu)$, for some cocycle $\mu$.

The first step of the proof is to consider the splitting of $t$ into an isomorphism and a principal isotopism (i.e. an isotopism that has identity as its third component, see \cite{Pflug}).

Thus, let $(L,\ast)$ be the loop defined on $F \times A$ so that $\gamma$ is an isomorphism from
$ Q_\theta$ to $(L,\ast)$.
Then $(\overline{\alpha} = \alpha \gamma^{-1},\overline{\beta} = \beta \gamma^{-1}, \Id)$ is a principal isotopism between $L$ and $ Q_\mu$.

\begin{diagram}
 Q_\theta &\rTo^{(\gamma,\gamma,\gamma)} &(L,\ast)\\
\dTo^{(\alpha,\beta,\gamma)} &\ldTo_{(\overline{\alpha},\overline{\beta},\Id)} \\
 Q_\mu
\end{diagram}

We would like to understand the multiplication in $L$.

Let $e$ be the neutral of the loop $L$. 
Write $(x_0,a_0)=\overline{\beta}(e)$, $(y_0,b_0)=\overline{\alpha}(e)$.
$\overline{t}$ is a isotopism, thus 
\[
\overline{\alpha}(x,a)\cdot_\mu \overline{\beta}(y,b)=(x,a)\ast(y,b) .
\]
In particular,
\[
\begin{cases}
 \overline{\alpha}(x,a)\cdot_\mu (x_0,a_0)=(x,a)\ast e=(x,a) \\
(y,b)\cdot_\mu \overline{\beta}(y,b)=(y,b)\ast e=(y,b)
\end{cases}.
\]
We can invert this system to find
\[
\begin{cases}
 \overline{\alpha}(x,a)=(x/x_0,a-a_0-\mu(x/x_0,x_0))\\
 \overline{\beta}(y,b)=(y_0 \backslash y, b-b_0 -\mu(y_0,y_0 \backslash y))
\end{cases}.
\]

Therefore, the multiplication in $L$ is simply
\begin{align*}
(x,a)\ast(y,b) 	= & \overline{\alpha}(x,a)\cdot_\mu \overline{\beta}(y,b) \\
		= & \Big(x/x_0.y_0\backslash y, \, a + b - a_0-b_0 - \mu(x/x_0, x_0)-\mu(y_0, y_0\backslash y) \\
		  & + \mu(x/x_0, y_0\backslash y)\Big) .\\
\end{align*}

To put it in a more familiar form, let us write $(z_0,c_0)=e$. Now since
\[
\overline{\alpha}(e)\cdot_\mu \overline{\beta}(e)=e\ast e=e 
\]
i.e.
\[
 (y_0,b_0)\cdot_\mu (x_0,a_0)=(y_0x_0,a_0+b_0+\mu(y_0,x_0))=(z_0,c_0)
\]
we must have 
\[
 \begin{cases}
  y_0 x_0=z_0\\
  -a_0-b_0=\mu(y_0,x_0)-c_0
 \end{cases}.
\]

Thus the mutiplication in $L$ takes the form:

\[
(x, a) \ast (y, b) = (x/x_0.y_0\backslash y, a + b - c_0 + \widetilde{\mu}(x, y))
\]
for $\widetilde{\mu}$ defined by
\begin{small}
\[
\widetilde{\mu}(x, y) = \mu(x/x_0, y_0\backslash y) - \mu(x/x_0, y_0\backslash z_0)-\mu(z_0/x_0, y_0\backslash y)+\mu(z_0/x_0, y_0\backslash z_0).
\]
\end{small}

\mbox{}

The second step of the proof is now to recognize some subgroup of $ Q_\theta$ on which we can apply the hypothesis.

Notice that we always have
\[
(z_0, a + c_0) \ast (z_0, b + c_0) = (z_0, a + b + c_0).
\]
Thus the map $a\mapsto (z_0, a + c_0)$ is an isomorphism from $A$ onto
\[
K_0 = \{(z_0, a); a \in A\} ,
\]
$K_0$ being equipped with the multiplication $\ast$. 

Similarly, it is easy to check that $K_0 \leq Z(L)$. In particular, $L/K_0$ is a loop, and $F$ is isotopic to it via the triple of bijections $F\to L/K_0$:
\[
 \begin{cases}
  x \mapsto (x\,x_0,0)\ast K_0 \\
  y \mapsto (y_0\,y,0)\ast K_0 \\
  z \mapsto (z,0)\ast K_0
 \end{cases}.
\]

Therefore, $\gamma^{-1}$ being an isomorphism between $L$ and $ Q_\theta$ , we can apply the hypothesis to $\gamma^{-1}(K_0)$; thus there exists some automorphism $g$ of $Q_\theta$ such that $g(1\times A) =\gamma^{-1}(K_0)$. 
As a conclusion, precomposing with $g$ if necessary, we can always assume that
\[
\gamma(1 \times A) = K_0.
\]

\mbox{}

Now, what we have left to do is simply to express this fact with mappings. This is in direct analogy this \cite{Petr}.

Define a map $h:A\to A$ by
\[
\gamma(1, a) = (z_0, h(a) + c_0).
\]
Notice that 
\[
\gamma(1,a)\ast\gamma(1,b)=(z_0,h(a)+c_0)\ast(z_0,h(b)+c_0)=(z_0,h(a)+h(b)+c_0).
\]
Since $\gamma$ is an isomorphism between $ Q_\theta$ and $L$, this is also
\[
\gamma((1,a)\cdot_\theta(1,b))=\gamma(1,a+b)=(z_0,h(a+b)+c_0).
\]
Thus, $h \in \Aut(A)$.

Define also $k:F\to F$ and $\tau:F\to A$ by 
\[
\gamma(x, 0) = (k(x), \tau(x)+c_0).
\]
We have of course $\gamma(1, 0)=e=(z_0,c_0)$, so $k(1) = z_0$ and $\tau(1) = 0$; in particular $\tau \in \Map_0(F,A)$.

Moreover, computing in two ways $\gamma(xy,0)=\gamma(x,0)\ast \gamma(y,0)$ yields the following identity for $k$:
\[
 k(x)/x_0.y_0 \backslash k(y)=k(xy).
\]

We can now express $\gamma$ in term of these maps:
\begin{align*}
\gamma(z,c)=&\gamma((z,0)\cdot_\theta(1,c))=(k(z),\tau (z)+c_0)\ast(z_0,hz+c_0)\\
=&(k(z),hz+\tau (z)+c_0) .
\end{align*}

Recall that we also know the expression of $\overline{\alpha} = \alpha \gamma^{-1}$ and $\overline{\beta}= \beta \gamma^{-1}$, so by composition with $\gamma$, we get:
\[
\begin{cases}
 \alpha(x,a)=(k(x)/x_0,hx+\tau (x)+c_0-a_0-\mu(k(x)/x_0,x_0))  \\
  \beta(y,b)=(y_0 \backslash k(y),hy+\tau (y)+c_0-b_0-\mu(y_0,y_0 \backslash k(y)))
\end{cases}.
\]

After writing explicitly that $\alpha(x,a)\cdot_\mu\beta(y,b)$ is always equal to $\gamma((x,a)\cdot_\theta(y,b))$, we get
\[
 h\theta + \widehat{\tau} = N(\mu(\widetilde{\alpha}, \widetilde{\beta})) 
\]
where $\widetilde{t}=(\widetilde{\alpha}, \widetilde{\beta}, \widetilde{\gamma})$ is defined to be the triple 
\[
 \begin{cases}
  \widetilde{\alpha}(x)=k(x)/x_0 \\
  \widetilde{\beta}(y)=y_0\backslash k(y)\\
  \widetilde{\gamma}(z)=k(z)
 \end{cases}.
\]

Now $\widetilde{t} \in \Atp(F)$, $h\in \Aut(A)$ and $\tau \in \Map_0(F,A)$, so $\theta \sim \mu$.

Thus $\theta$ is separable.
\end{proof}

We leave to the reader to check that the following results, proved in \cite{Petr}, 3.3-3.7, still hold in our setting, thanks to Theorem \ref{theorem} (we recall that if a loop is isotopic to a group, then it is isomorphic to it, see \cite{Pflug}).
 
\begin{proposition}
If $Q(F,A,\theta )$ is an abelian group, and $A = \mathbb{Z}_p$ for $p$ a prime integer, then $\theta$ is isotopy separable.
\end{proposition}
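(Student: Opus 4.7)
The plan is to invoke Theorem \ref{theorem}: it suffices to verify that $\Aut(Q_\theta)$ acts transitively on
\[
\mathcal{K}=\{K \leq Z(Q_\theta);\; K\cong A,\, Q_\theta/K\simeq F\}.
\]
Since $Q_\theta$ is an abelian group, $Z(Q_\theta)=Q_\theta$ and $F=Q_\theta/A$ is itself a group; because a loop isotopic to a group is isomorphic to it, the condition $Q_\theta/K\simeq F$ collapses to $Q_\theta/K\cong F$, and $\mathcal{K}$ reduces to the set of order-$p$ subgroups $K\leq Q_\theta$ with $Q_\theta/K\cong Q_\theta/A$.

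Next I would reduce to the $p$-primary part by writing $Q_\theta=G_p\oplus G_{p'}$. Both summands are characteristic, so every $\sigma\in\Aut(G_p)$ extends to $Q_\theta$ by the identity on $G_{p'}$. Every $K\in\mathcal{K}$ lies in $G_p$, and using $Q_\theta/K\cong (G_p/K)\oplus G_{p'}$ the quotient condition becomes $G_p/K\cong G_p/A$. So it remains to show that $\Aut(G_p)$ acts transitively on order-$p$ subgroups of $G_p$ that yield a prescribed quotient.

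The technical core is then the classical height theory of finite abelian $p$-groups: for $x\in G_p$ of order $p$, both the $\Aut(G_p)$-orbit of $\langle x\rangle$ and the isomorphism class of $G_p/\langle x\rangle$ depend only on the height $h(x)=\max\{h:\ x\in p^h G_p\}$. Concretely, one picks a cyclic decomposition $G_p=\bigoplus\bbZ_{p^{e_i}}$ adapted to $x$, in which $x$ appears as $p^{h(x)}$ times one of the basis vectors; comparing such adapted decompositions for generators of $A$ and of $K$ (necessarily of the same height, by the quotient condition) yields an automorphism of $G_p$ carrying $A$ onto $K$.

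The main subtlety I expect is in the adapted-basis (or Pr\"ufer/Ulm) step, namely that any order-$p$ element of $G_p$ can be written as $p^{h(x)}$ times a generator of a cyclic direct summand, and that the quotient is thereby determined by the height. Once this is invoked, the reductions above are formal, and Theorem \ref{theorem} delivers the desired isotopy separability of $\theta$.
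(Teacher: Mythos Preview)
Your proposal is correct and follows essentially the same route as the paper: invoke Theorem~\ref{theorem}, collapse the isotopy condition $Q_\theta/K\simeq F$ to an isomorphism condition using the fact that a loop isotopic to a group is isomorphic to it, and then verify transitivity of $\Aut(Q_\theta)$ via the structure theory of finite abelian $p$-groups. The paper simply cites \cite{Petr} for this last verification, whereas you spell out the height/adapted-basis argument explicitly; no substantive difference.
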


\begin{lemma}
Let $Q =  Q_\theta$, $A = \mathbb{Z}_p$, $p$ a prime. Assume further that one of the following conditions is satisfied:
\begin{enumerate}[(i)]
\item $|Q| = p$,
\item $|Q| = pq$, where $q$ is a prime,
\item $[Q : Z(Q)] \leq 2$,
\item $|Q| < 12$.
\end{enumerate}
Then $\theta$ is isotopy separable.
\end{lemma}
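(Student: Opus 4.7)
My plan is to follow the strategy of the corresponding results in \cite{Petr}, Lemmas 3.3--3.7, adapting them to the isotopy setting via Theorem \ref{theorem} and the previous proposition. For each of the four cases, the goal is either to apply the previous proposition (if $Q_\theta$ is an abelian group), or to verify the hypothesis of Theorem \ref{theorem}, namely that $\Aut(Q_\theta)$ acts transitively on
\[
\mathcal{K} = \{K \leq Z(Q_\theta);\; K \cong A,\; Q_\theta/K \simeq F\}.
\]

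The key observation, kindly recalled by the author, is that any loop isotopic to a group is isomorphic to it. Hence, whenever $F$ is itself a group, the set $\mathcal{K}$ here coincides with the isomorphism-based set used in \cite{Petr}, so the transitivity arguments from that paper apply verbatim after replacing ``$\cong$'' by ``$\simeq$'' in the quotient condition throughout the original proofs.

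With this reduction in mind, case (i) collapses to $F = 1$, $\theta = 0$, $Q = A$ abelian, and the previous proposition applies. Case (iii) reduces, via the classical fact that a loop with central quotient of order at most $2$ is an abelian group, again to the previous proposition. For case (ii), $|Q| = pq$ forces $|F| = q$ to be prime; one argues, as in \cite{Petr}, that either $Q$ is itself an abelian group (and the previous proposition applies) or the $p$-part of $Z(Q)$ is a unique copy of $A$, so $\mathcal{K}$ is a singleton on which the action is trivially transitive. Case (iv) is a finite check over the orders $|Q| \in \{2, 3, \dots, 11\}$; most factorizations fall under (i)--(iii), and the residual orders (essentially $6$, $8$, $10$ with nontrivial~$F$) are handled by direct inspection of the possible central subgroups.

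The main obstacle is precisely the weakening of isomorphism to isotopy in the definition of $\mathcal{K}$: one must verify that the condition $Q_\theta/K \simeq F$ does not enlarge $\mathcal{K}$ beyond the set used in \cite{Petr}. In cases (i)--(iii) this is automatic from the quoted fact about loops isotopic to groups, since $F$ is forced to be a group there. In case (iv), one must check explicitly, for the few non-group loops $F$ of small order, that no spurious $K$ with $Q_\theta/K \simeq F$ but $Q_\theta/K \not\cong F$ sneaks into $\mathcal{K}$; this is the only place where the isotopy setting requires slightly more bookkeeping than the original isomorphism argument.
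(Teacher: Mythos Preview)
Your proposal is correct and follows exactly the route the paper indicates: the paper does not give an independent proof but simply states that the arguments of \cite{Petr}, 3.3--3.7 carry over via Theorem~\ref{theorem}, together with the observation that a loop isotopic to a group is isomorphic to it. Your case-by-case sketch is in fact more detailed than what the paper provides; the only superfluous worry is in case~(iv), where the residual orders force $|F|\leq 4$ (the case $|F|=5$ already falls under (ii)), so $F$ is automatically a group and no extra bookkeeping for non-group $F$ is actually needed.
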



\section{The invariant subspaces}
\label{inv}

Following \cite{Petr}, define for $(t,h)\in\Atp(F,A)$:
\[
\Inv(t,h)= \{\theta \in \C(F,A);\; \theta -^{(t,h)}\theta \in \B(F,A)\}
\]
and for $\emptyset \neq H \subset \Atp(F,A)$:
\[
\Inv(H)= \bigcap_{(t,h)\in H}\Inv(t,h).
\]
We state the following, the proof of which is exactly the same as in \cite{Petr}:
\begin{lemma}
Let $\emptyset \neq H \subset \Atp(F,A)$. Then 
\[
\Inv(H) = \Inv(\langle H\rangle ).
\]
\end{lemma}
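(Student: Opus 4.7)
The plan is to translate the definition of $\Inv$ into a statement about fixed classes in the cohomology $\HH(F,A)$, and then invoke the elementary fact that stabilizers of points under a group action are subgroups.

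First I would note that the inclusion $\Inv(\langle H\rangle) \subseteq \Inv(H)$ is immediate from the definition and the fact that $H \subseteq \langle H\rangle$, since $\Inv$ is clearly antitone in its argument: intersecting over a larger set of elements can only shrink the invariant space.

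For the reverse inclusion, the key observation is that the defining condition $\theta - {}^{(t,h)}\theta \in \B(F,A)$ says precisely that the class $[\theta] \in \HH(F,A)$ satisfies $[\theta] = [{}^{(t,h)}\theta]$. Because the action of $\Atp(F,A)$ on $\C(F,A)$ descends to a well-defined action on $\HH(F,A)$ (established at the end of Section~\ref{action}), this is equivalent to the statement that $(t,h)\cdot [\theta] = [\theta]$ in $\HH(F,A)$. Consequently, for any nonempty $S\subseteq\Atp(F,A)$,
\[
\Inv(S) \;=\; \{\theta \in \C(F,A);\; (t,h)\cdot [\theta] = [\theta] \text{ for all } (t,h) \in S\},
\]
i.e.\ $\theta \in \Inv(S)$ iff $S$ is contained in the stabilizer $\Stab([\theta])$ of the class $[\theta]$ under the action of $\Atp(F,A)$ on $\HH(F,A)$.

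Now fix $\theta \in \Inv(H)$. By the observation above, $H \subseteq \Stab([\theta])$. Since $\Stab([\theta])$ is a subgroup of $\Atp(F,A)$, it must contain the subgroup generated by $H$, hence $\langle H\rangle \subseteq \Stab([\theta])$. Applying the observation once more in the other direction yields $\theta \in \Inv(\langle H\rangle)$, which completes the proof. There is no real obstacle here; the only thing worth being careful about is that the reduction to a stabilizer condition really does require the (already verified) fact that the $\Atp(F,A)$-action descends from $\C(F,A)$ to $\HH(F,A)$, which is what makes ``$(t,h)$ fixes $[\theta]$'' a well-posed notion independent of the representative.
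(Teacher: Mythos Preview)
Your argument is correct. The paper does not actually spell out a proof of this lemma; it simply states that ``the proof of which is exactly the same as in \cite{Petr}'' and moves on. Your reformulation in terms of stabilizers is the standard way to see it: once one knows that the action of $\Atp(F,A)$ on $\C(F,A)$ descends to $\HH(F,A)$, the condition $\theta-{}^{(t,h)}\theta\in\B(F,A)$ is exactly $(t,h)\cdot[\theta]=[\theta]$, and then the result is just the fact that a point stabilizer is a subgroup. This is presumably what the referenced proof in \cite{Petr} amounts to as well (possibly phrased as the direct check that $\Inv(t_1,h_1)\cap\Inv(t_2,h_2)\subseteq\Inv(t_1t_2,h_1h_2)$ and $\Inv(t,h)=\Inv(t^{-1},h^{-1})$, which is the same computation unpacked). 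One cosmetic point: the macro \texttt{\textbackslash Stab} is not defined in this paper, so if you intend to splice your text in verbatim you should write $\operatorname{Stab}$ or define the command.
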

\begin{corollary}
Let $H, K \leq \Atp(F,A)$. Then 
\[
\Inv(H)\cap \Inv(K)=\Inv(\langle H\cup K\rangle).
\]
\end{corollary}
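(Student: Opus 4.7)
The plan is to reduce the corollary directly to the preceding lemma by unpacking the definition of $\Inv$ on a subset. The key observation is that the operator $\Inv(\cdot)$ turns unions into intersections, because it was defined as an intersection over all single elements.

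First, I would expand using the definition:
\[
\Inv(H)\cap\Inv(K) \;=\; \Bigl(\bigcap_{(t,h)\in H}\Inv(t,h)\Bigr)\cap\Bigl(\bigcap_{(t,h)\in K}\Inv(t,h)\Bigr) \;=\; \bigcap_{(t,h)\in H\cup K}\Inv(t,h) \;=\; \Inv(H\cup K).
\]
This identity holds for arbitrary subsets, but I need $H\cup K$ nonempty to invoke the previous lemma; since $H$ and $K$ are subgroups, each contains the identity autotopism, so $H\cup K \neq \emptyset$.

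Second, I would apply the preceding lemma to the set $H\cup K$, which gives
\[
\Inv(H\cup K) \;=\; \Inv(\langle H\cup K\rangle),
\]
and combining this with the previous display yields the desired equality.

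There is essentially no obstacle here — the argument is a one-line chain of equalities, the only point worth flagging being the (trivial) nonemptiness of $H\cup K$ that is needed to cite the lemma. No additional structural properties of $H$ and $K$ as subgroups (beyond containing the identity) are used in the proof itself; the subgroup hypothesis becomes relevant only because $\langle H\cup K\rangle$ is then the smallest subgroup containing both $H$ and $K$, which is the natural object to describe the joint invariants.
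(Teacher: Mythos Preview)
Your proof is correct and is exactly the intended argument: the paper states this result as an immediate corollary of the preceding lemma without giving a separate proof, and the one-line derivation you wrote---collapse the two intersections into $\Inv(H\cup K)$ and then apply the lemma---is the implicit reasoning.
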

For $t,u\in \Atp(F)$ and $h,k\in \Aut(A)$, let $^u t=utu^{-1}$, $^k h=khk^{-1}$. 
\begin{lemma}
Let $(t,h),\,(u,k)\in\Atp(F,A)$. Then 
\[
\theta \in \Inv(t,h) \text{ if and only if } ^{(u,k)}\theta \in \Inv(^ut,^kh).
\]
\end{lemma}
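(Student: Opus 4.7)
The plan is to read the statement as a conjugation-equivariance property of the relation that defines $\Inv(t,h)$, and reduce it to two facts already established in the paper: the action of $\Atp(F,A)$ on $\C(F,A)$ is a linear group action (proved in the long lemma of Section~3), and it restricts to an action on $\B(F,A)$ (noted just before Lemma~\ref{isotopic}, via $\tau \mapsto \tau'$).

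First I would apply the element $(u,k)$ to the difference $\theta - {}^{(t,h)}\theta$. By linearity, this gives
\[
{}^{(u,k)}\!\bigl(\theta - {}^{(t,h)}\theta\bigr) = {}^{(u,k)}\theta - {}^{(u,k)}\bigl({}^{(t,h)}\theta\bigr),
\]
and by the group-action property, the second summand equals ${}^{(u,k)(t,h)}\theta$.

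Next I would carry out the short computation in $\Atp(F,A) = \Atp(F)\times\Aut(A)$ showing that
\[
({}^{u}t,\,{}^{k}h)\cdot(u,k) = (utu^{-1}u,\,khk^{-1}k) = (ut,kh) = (u,k)\cdot(t,h),
\]
so that, applying the group-action property again,
\[
{}^{(u,k)(t,h)}\theta \;=\; {}^{({}^u t,\,{}^k h)(u,k)}\theta \;=\; {}^{({}^u t,\,{}^k h)}\bigl({}^{(u,k)}\theta\bigr).
\]
Combining, I obtain the key identity
\[
{}^{(u,k)}\!\bigl(\theta - {}^{(t,h)}\theta\bigr) \;=\; {}^{(u,k)}\theta \;-\; {}^{({}^u t,\,{}^k h)}\bigl({}^{(u,k)}\theta\bigr).
\]

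Finally, because $\Atp(F,A)$ acts on $\B(F,A)$, the element $(u,k)$ permutes $\B(F,A)$ bijectively; hence $\theta - {}^{(t,h)}\theta \in \B(F,A)$ if and only if its image under $(u,k)$ lies in $\B(F,A)$. By the displayed identity this is exactly the statement ${}^{(u,k)}\theta \in \Inv({}^u t,\,{}^k h)$, which proves both implications simultaneously. There is no real obstacle: the only point that could trip one up is the order of composition in $\Atp(F,A)$, which is why I would write out the small verification $({}^u t,\,{}^k h)(u,k) = (u,k)(t,h)$ explicitly rather than leave it implicit.
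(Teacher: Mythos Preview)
Your proof is correct and is precisely the standard conjugation argument one would expect; the paper itself does not give a proof here, referring instead to \cite{Petr} for this and the neighboring lemmas. In particular, your use of linearity (end of the proof of Lemma~3.1), the group-action identity, and the invariance of $\B(F,A)$ under the action are exactly the ingredients the paper has set up, so nothing is missing.
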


For $H \leq \Atp(F,A)$, let
\begin{align*}
&\Inv^\ast(H) = \{\theta \in \C(F,A);\; \theta \in \Inv(t,h) \text{ if and only if } (t,h)\in H\} ,\\
&\Inv^\ast_c(H)=\bigcup_{(t,h)\in\Atp(F,A)} \Inv^\ast(^{(t,h)}H).
\end{align*}

If $G$ is a group and $H \leq G$, let $N_G(H) = \{a \in G;\; ^aH = H\}$ be the normalizer of $H$ in $G$.
\begin{lemma}
Let $H \leq G = \Atp(F,A)$. Then 
\[
|\Inv^\ast_c(H)|=|\Inv^\ast(H)|\cdot [G : N_G(H)].
\]
\end{lemma}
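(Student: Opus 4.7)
The plan is to decompose $\Inv^\ast_c(H)$ as a disjoint union of conjugate ``pieces'' of uniform size, and then count the pieces via the orbit-stabilizer theorem applied to the conjugation action of $G$ on its subgroups.

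First I would establish disjointness: for any $K \leq G$, the set $\Inv^\ast(K)$ consists of cocycles $\theta$ for which $\{(t,h) \in G : \theta \in \Inv(t,h)\}$ equals $K$ exactly. Hence this set $K$ is recoverable from $\theta$. Consequently, if $K_1 \neq K_2$ then $\Inv^\ast(K_1) \cap \Inv^\ast(K_2) = \emptyset$. Applied to conjugates $K_i = {}^{(t_i,h_i)}H$, this shows that the union
\[
\Inv^\ast_c(H) = \bigcup_{(t,h)\in G} \Inv^\ast({}^{(t,h)}H)
\]
is a disjoint union indexed by the distinct conjugates of $H$ in $G$.

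Next I would show all the pieces have the same cardinality, namely $|\Inv^\ast(H)|$. Using the previous lemma (``$\theta \in \Inv(t,h)$ iff ${}^{(u,k)}\theta \in \Inv({}^{u}t,{}^{k}h)$''), the map $\theta \mapsto {}^{(u,k)}\theta$ is a bijection of $\C(F,A)$ that restricts to a bijection between $\Inv(H)$ and $\Inv({}^{(u,k)}H)$, and in fact to a bijection between $\Inv^\ast(H)$ and $\Inv^\ast({}^{(u,k)}H)$, since the precise equality of the stabilizing set is preserved by this relabelling. Therefore $|\Inv^\ast({}^{(u,k)}H)| = |\Inv^\ast(H)|$ for every $(u,k) \in G$.

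Finally, the number of distinct conjugates of $H$ in $G$ is the index $[G : N_G(H)]$ of its normalizer, by the standard orbit-stabilizer argument applied to the conjugation action of $G$ on its subgroups (the stabilizer of $H$ is precisely $N_G(H)$). Combining the three ingredients yields
\[
|\Inv^\ast_c(H)| \;=\; \sum_{K \in G\cdot H} |\Inv^\ast(K)| \;=\; [G : N_G(H)] \cdot |\Inv^\ast(H)|,
\]
as claimed. The only mildly delicate point is the disjointness step, where one must be careful that $\Inv^\ast$ records an \emph{exact} stabilizing subgroup rather than an inclusion, so that two different conjugates of $H$ cannot share a common element; everything else is bookkeeping using the earlier conjugation lemma.
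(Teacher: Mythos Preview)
Your proof is correct and follows the standard argument: disjointness of the $\Inv^\ast(K)$ from the ``exact stabilizer'' definition, equal cardinality of conjugate pieces via Lemma~5.3, and the orbit--stabilizer count of conjugates. The paper itself does not spell out a proof here but defers to \cite{Petr}, whose argument is precisely the one you have written; so your proposal matches the intended proof.
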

For a group $G$, denote by $\Sub_c(G)$ a set of subgroups of $G$ such that for every $H \leq G$ there is precisely one $K \in \Sub_c(G)$ such that $K$ is conjugate to $H$.
\begin{theorem}
\label{theonum}
Let $F$ be a loop and $A$ an abelian group. Assume that $\theta$ is separable for
every $\theta \in \C(F,A)$. Let $G = \Atp(F,A)$. Then there are
\[
\sum_{H\in \Sub_c(G)} \frac{|\Inv^\ast_c(H)|}{|\B(F,A)|\cdot[G:H]}= \sum_{H\in \Sub_c(G)} \frac{|\Inv^\ast(H)|}{|\B(F,A)|\cdot[N_G(H):H]}
\]
central extensions of $A$ by $F$, up to isotopism.
\end{theorem}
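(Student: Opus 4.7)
The plan is to identify isotopy classes of central extensions with $G$-orbits on $\HH(F,A)$, and then count those orbits by grouping them according to the conjugacy class of their stabilizer. By Lemmas~\ref{isomorphic} and~\ref{isotopic}, $\theta\sim\mu$ implies $Q(F,A,\theta)\simeq Q(F,A,\mu)$; the separability assumption of the theorem provides the converse. So the isotopy classes of central extensions of $A$ by $F$ are in bijection with the $\sim$-classes on $\C(F,A)$. Unwinding the definition of $\sim$, two cocycles $\theta$ and $\mu$ are equivalent iff their images in $\HH(F,A)=\C(F,A)/\B(F,A)$ lie in the same orbit of $G$ for the action induced in Section~\ref{action}. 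Hence counting isotopy classes amounts to counting $G$-orbits on $\HH(F,A)$.

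Next I would note that the condition $\theta-{}^{(t,h)}\theta\in\B(F,A)$ defining $\Inv(t,h)$ depends only on the class $[\theta]\in\HH(F,A)$. Therefore $\Inv(H)$, $\Inv^\ast(H)$ and $\Inv^\ast_c(H)$ are all unions of $\B(F,A)$-cosets, and their images in $\HH(F,A)$ have cardinalities obtained by dividing by $|\B(F,A)|$. Under the induced $G$-action on $\HH(F,A)$, the image of $\Inv^\ast(H)/\B(F,A)$ is exactly the set of classes whose stabilizer equals $H$, and similarly the image of $\Inv^\ast_c(H)/\B(F,A)$ is the set of classes whose stabilizer is conjugate to $H$.

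The remainder is orbit--stabilizer bookkeeping. Since stabilizers of elements in a single orbit are pairwise conjugate, each $G$-orbit on $\HH(F,A)$ has a well-defined conjugacy class of stabilizer in $\Sub_c(G)$; an orbit whose stabilizer is conjugate to $H$ has length $[G:H]$. Dividing the number of classes with stabilizer conjugate to $H$ by this orbit length gives
\[
\frac{|\Inv^\ast_c(H)|}{|\B(F,A)|\cdot[G:H]}
\]
orbits of that type, and summing over $H\in\Sub_c(G)$ yields the left-hand formula. The equality with the right-hand formula follows by substituting the previously established identity $|\Inv^\ast_c(H)|=|\Inv^\ast(H)|\cdot[G:N_G(H)]$ and invoking the elementary factorization $[G:H]=[G:N_G(H)]\cdot[N_G(H):H]$.

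The main ``obstacle'' is conceptual rather than technical: one must distinguish carefully between objects in $\C(F,A)$ and their images in $\HH(F,A)$, and recognise that $\Inv^\ast(H)$ picks out cocycles whose stabilizer \emph{in the induced action on $\HH(F,A)$} equals $H$, not their stabilizer as individual cocycles. Once this is correctly framed, the enumeration is immediate from the orbit--stabilizer partition, and the two expressions on the two sides of the equality are related by a purely formal rewriting.
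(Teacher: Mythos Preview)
Your argument is correct and is exactly the intended one: the paper does not spell out a proof of Theorem~\ref{theonum} (Section~\ref{inv} is stated in parallel with \cite{Petr}), but the underlying reasoning there is precisely the orbit--stabilizer count you give, passing from $\C(F,A)$ to $\HH(F,A)$ and identifying $\Inv^\ast(H)/\B(F,A)$ with the set of classes whose $G$-stabilizer equals $H$. Your explicit remark that $\Inv(t,h)$, $\Inv^\ast(H)$, $\Inv^\ast_c(H)$ are unions of $\B(F,A)$-cosets (using that $\Atp(F,A)$ preserves $\B(F,A)$) is the only point one might want to make explicit, and you do.
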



\section{Nilpotent loops of order $2q$, $q$ prime}
\label{2q}
We now investigate the $ 2q $ order case, with $ q $ an odd prime integer throughout.
The discussion in \cite{Petr} showing that we can suppose $ A=\bbZ_2 $, $ F=\bbZ_q $ and that each cocycle is admissible is still valid; we can therefore use fully Theorem \ref{theonum} in the computation of the number of nilpotent loops of order $ 2q $. 
In order to do so, the first step is to understand the structure of $\Atp(F)$.

\subsection{Subgroup structure of $ \Atp(\bbZ_q) $}
\label{SS}

We recall the following proposition from \cite{Cla}.
\begin{proposition}
\label{proposition}
Let $G$ be a finite abelian group. Then
\begin{align*}
\phi: \Aut(G) \ltimes G^2 &\rightarrow \Atp(G) \\
(h,x_0,y_0) &\mapsto t_{h,x_0,y_0}
\end{align*}
is an isomorphism, where the multiplication on $ \Aut(G) \ltimes G^2$ is given by
\[
(h,X)(h',X')=(hh',hX'+X).
\]
and where the autotopisms $t_{h,x_0,y_0}$ are defined by
\[
\begin{cases}
	x \mapsto hx+x_0 \\
	y \mapsto hy+y_0\\
	z \mapsto hz+x_0+y_0
\end{cases} .
\]

\end{proposition}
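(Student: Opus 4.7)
The plan is to exhibit $\phi$ as an isomorphism by verifying directly that it is well-defined, a homomorphism, injective, and surjective, with the main content concentrated in the surjectivity argument.

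First I would check that each $t_{h,x_0,y_0}$ really is an autotopism. Writing $(\alpha,\beta,\gamma)$ for its three components, one computes
\[
\alpha(x)+\beta(y) = (hx+x_0)+(hy+y_0) = h(x+y)+x_0+y_0 = \gamma(x+y),
\]
so $\phi$ takes values in $\Atp(G)$. Next I would check that $\phi$ is a homomorphism with respect to the semi-direct product law $(h,X)(h',X')=(hh',hX'+X)$: composing $t_{h,x_0,y_0}$ with $t_{h',x_0',y_0'}$ on the first component gives $x \mapsto h(h'x+x_0')+x_0 = hh'\,x + (hx_0'+x_0)$, matching $t_{hh',hx_0'+x_0,hy_0'+y_0}$ on the first component, and similarly for the other two.

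For injectivity, if $t_{h,x_0,y_0}=t_{h',x_0',y_0'}$, then evaluating the first component at $x=0$ gives $x_0=x_0'$, whence $hx=h'x$ for all $x$ and so $h=h'$; the same argument on the second component gives $y_0=y_0'$. The bulk of the proof lies in surjectivity. Given an autotopism $(\alpha,\beta,\gamma)$ of $G$, the identity $\alpha(x)+\beta(y)=\gamma(x+y)$ specialized at $y=0$ yields $\gamma(x)=\alpha(x)+\beta(0)$, and specialized at $x=0$ yields $\beta(y)=\alpha(y)+\beta(0)-\alpha(0)$. Setting $h(x):=\alpha(x)-\alpha(0)$, $x_0:=\alpha(0)$, $y_0:=\beta(0)$, one obtains $\alpha(x)=h(x)+x_0$, $\beta(y)=h(y)+y_0$, and $\gamma(z)=h(z)+x_0+y_0$. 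Comparing the two expressions $\gamma(x+y)=h(x+y)+x_0+y_0$ and $\gamma(x+y)=\alpha(x)+\beta(y)=h(x)+h(y)+x_0+y_0$ forces $h$ to be a group homomorphism, and since $\alpha$ is a bijection so is $h$; thus $h\in\Aut(G)$ and $(\alpha,\beta,\gamma)=t_{h,x_0,y_0}$ lies in the image of $\phi$.

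The main obstacle is really just the surjectivity step, and it is not so much an obstacle as a bookkeeping exercise: everything follows from evaluating the defining identity at $x=0$ and at $y=0$, and using that $G$ is abelian is implicit in writing the operation additively but is not essential beyond allowing the neat form of the semi-direct product. No part of the argument uses finiteness of $G$, so the statement is actually valid for arbitrary abelian groups; finiteness is only relevant for the counting applications in later sections.
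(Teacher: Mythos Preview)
Your argument is correct and complete. The paper itself does not supply a proof here; it simply cites \cite{Cla}, Example~3.4, so there is no in-paper approach to compare against. Your direct verification---well-definedness, homomorphism, injectivity, and then surjectivity by specializing the autotopism identity at $x=0$ and $y=0$---is the natural route and works cleanly.

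One minor correction to your closing remarks: your observation that finiteness of $G$ is not used is right, but the claim that abelianness ``is not essential beyond allowing the neat form of the semi-direct product'' is misleading. For a non-abelian group the autotopism group is genuinely larger than $\Aut(G)\ltimes G^2$: left and right translations are distinct, and for example $(\mathrm{id},L_g,L_g)$ and $(R_g,\mathrm{id},R_g)$ are both autotopisms but cannot simultaneously be put in the form $t_{h,x_0,y_0}$ unless $g$ is central. So abelianness is used in an essential way in the surjectivity step, precisely where you rearrange $\alpha(x)+\beta(y)$ freely.
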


Let us introduce some notation. For $m$ a generator of $F\setminus\{0\}\cong \bbZ_{q-1}$, $d$ a divisor of $q-1$, $X\in F^2$ and $y\in F$, define

\[
\begin{cases}
 H_d^X=\langle(m^d,X)\rangle=\{ (m^{kd},\frac{1-m^{kd}}{1-m^d} X);\; k \in \bbZ \} \\
 K_y=\langle(1,(1,y))\rangle=\{(1,(k,ky));\; k\in \bbZ \} \\
 \widetilde{K}=\langle(1,(0,1))\rangle=\{(1,(0,k));\; k\in \bbZ \}
\end{cases} .
\]

Since by \cite{Cla} for a fixed $d$ all $H_d^X$ are conjugate (see Table 1), we simply write $H_d$ instead of $H_d^{(0,0)}$. Note that this notation is consistent with the one in \cite{Petr}.

Here are now all subgroups of $\Atp(F)$, up to conjugacy

\begin{table}[H]
\begin{center}
\begin{tabular}{|l|l|l|l|}
\hline
subgroup $H$ & normalizer $N_G(H)$ & conjugates & $[N_G(H):H]$ \\ \hline\hline
$\{1\}$  & $\Atp(F)$ & only itself & $q^2(q-1)$ \\ \hline
$H_d,\, d\neq q-1$ & $\Aut(F)$ & every  $H_d^X$ &$d$ \\ \hline
$K_y$ or $\widetilde{K}$ &  $\Atp(F)$ & only itself & $q(q-1)$ \\  \hline
$H_d \cdot K_y$, $d\neq q-1$ & $\Aut(F)\cdot K_y$ &every $H_d^X\cdot K_y$ &$d$ \\  \hline
$H_d \cdot \widetilde{K}$, $d\neq q-1$ & $\Aut(F)\cdot \widetilde{K}$&every $H_d^X\cdot \widetilde{K}$  &$d$\\  \hline
$H_d\ltimes F^2$  & $\Atp(F)$ & only itself  &$d$\\ \hline
\end{tabular}
\end{center}
\caption{Representatives for conjugacy classes of $F=\Atp(\bbZ_q)$ and their normalizer.}
\end{table}
\begin{proof}
 See \cite{Cla}, Example 3.4.
\end{proof}

\subsection{$ \dim(\Inv(H))  $, $ H \leq  \Atp(\bbZ_q) $}

In the next proposition, we compute the dimensions of the invariant spaces of the subgroups of $\Atp(F)$, with as before $ A=\bbZ_2 $, $ F=\bbZ_q $ and $q$ an odd prime (see Subsection \ref{SS} for notations). 

\begin{proposition}
\label{p1}
The dimensions of the invariant spaces of the subgroups of $\Atp(F)$ are indicated in Table 2 below, where $d$ is any divisor of $q-1$.

\begin{table}[H]
\begin{center}
\begin{tabular}{|l||l|l|l|}
\hline
subgroup $H$ & $H_d$&$H_d\cdot K_y$, $y\notin \{0,-1\}$& other \\ \hline
$ \dim(\Inv(H)/\B(F,A))  $& $(q-2)d$&$d$&0\\ \hline
\end{tabular}
\end{center}
\caption{Representatives for conjugacy classes of $F=\Atp(\bbZ_q)$ and dimension of their invariant subspaces.}
\end{table}

\end{proposition}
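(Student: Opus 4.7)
My plan starts from the observation that $\dim \Inv(H)/\B(F,A) = \dim \HH(F,A)^H$, where $\HH=\C/\B$ and $H$ acts on $\HH$ through its $\C$-action (which preserves $\B$). Since $\Aut(\bbZ_2)$ is trivial we have $\Atp(F,A)=\Atp(F)$. For the case $H=H_d$ (allowing the trivial case $H_{q-1}=\{1\}$), I would note that, restricted to $\Aut(F)$, both $\C\cong\bbZ_2[(F\setminus\{0\})^2]$ (diagonal action) and $\B\cong\Map_0(F,A)\cong\bbZ_2[F\setminus\{0\}]$ (via the $\Aut(F)$-equivariant injection $\tau\mapsto\widehat\tau$, which is injective because $\Hom(F,A)=0$) are permutation modules. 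Because $\Aut(F)\cong(\bbZ_q)^*$ acts regularly on $F\setminus\{0\}$, $\B$ is a free $\bbZ_2[H_d]$-module, so $H^1(H_d,\B)=0$. The long exact sequence for $H_d$-invariants applied to $0\to\B\to\C\to\HH\to0$ collapses to an exact sequence
\[
0\to\B^{H_d}\to\C^{H_d}\to\HH^{H_d}\to 0,
\]
and counting $H_d$-orbits ($d$ on $F\setminus\{0\}$, $d(q-1)$ on $(F\setminus\{0\})^2$) gives $\dim\HH^{H_d}=d(q-1)-d=d(q-2)$.

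For $H=H_d\cdot K_y$ with $y\notin\{0,-1\}$, I use $\HH^H=\HH^{H_d}\cap\HH^{K_y}$, intersecting the $d(q-2)$-dimensional space $\HH^{H_d}$ with the $K_y$-fixed subspace. Since $K_y$ is cyclic of order $q$ (coprime to $|A|=2$), Maschke makes the $K_y$-action on $\HH$ semisimple. The generator of $K_y$ is the principal autotopism $t_{1,1,y}$, with $\alpha(x)=x+1$, $\beta(z)=z+y$, $\gamma(z)=z+1+y$, whose action on a cocycle (via Lemma \ref{isotopic} and the definition of $N$) sends $\theta(x,z)$ to
\[
\theta(x-1,z-y)-\theta(x-1,-y)-\theta(-1,z-y)+\theta(-1,-y).
\]
The plan is to expand this map in the orbit-sum basis of $\HH^{H_d}$ from the first case and explicitly identify its fixed subspace, which should be $d$-dimensional. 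The hypothesis $y\neq 0,-1$ is essential: it ensures that none of $\alpha$, $\beta$, $\gamma$ is the identity, so none of the four shifted terms in the $N$-formula degenerates or coincides with the original cocycle entries.

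For the ``other'' subgroups I would show $\HH^H=0$ by containment. Each remaining conjugacy class representative ($K_0$, $K_{-1}$, $\widetilde K$, $H_d\cdot K_0$, $H_d\cdot K_{-1}$, $H_d\cdot\widetilde K$, or $H_d\ltimes F^2$) contains one of $K_0$, $K_{-1}$, $\widetilde K$ as a subgroup (even $H_d\ltimes F^2\supseteq K_0$), so it suffices to prove $\HH^{K_0}=\HH^{K_{-1}}=\HH^{\widetilde K}=0$. In each of these three subcases the generator has, respectively, $\beta=\Id$, $\gamma=\Id$, or $\alpha=\Id$, and two of the four terms in the $N$-action collapse; a direct computation then shows that no non-zero class in $\HH$ is fixed. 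The main obstacle will be the second case: explicitly diagonalizing the $K_y$-action on the relatively large space $\HH^{H_d}$ and extracting the $d$-dimensional fixed subspace requires careful combinatorial tracking of how the four $N$-terms act on orbit-sum representatives, and the cleanest route is likely a character-theoretic decomposition exploiting that $|K_y|=q$ is coprime to $2$.
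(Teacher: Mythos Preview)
Your cohomological treatment of the $H_d$ case is correct and tidier than the paper's (which just cites earlier work): since $\Aut(F)$ acts regularly on $F\setminus\{0\}$, the coboundary module $\B$ is free over $\bbZ_2[H_d]$, $H^1(H_d,\B)=0$, and the long exact sequence reduces the problem to the orbit count $d(q-1)-d=d(q-2)$.

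There is, however, a genuine error in the ``other'' cases, specifically $K_{-1}$. You argue that for $K_0,K_{-1},\widetilde K$ one of $\alpha,\beta,\gamma$ is the identity and hence ``two of the four terms in the $N$-action collapse.'' For $K_{-1}$ the generator indeed has $\gamma=\Id$, but the action ${}^{(t,h)}\theta=N(h\theta(\alpha^{-1},\beta^{-1}))$ involves only $\alpha$ and $\beta$; the component $\gamma$ never enters. With $\alpha^{-1}(x)=x-1$ and $\beta^{-1}(y)=y+1$ the four terms are $\theta(x-1,y+1)$, $\theta(x-1,1)$, $\theta(-1,y+1)$, $\theta(-1,1)$, and none of them vanishes. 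In fact $K_{-1}$ is precisely the delicate case: the fixed space $\C^{K_{-1}}$ has dimension $q-1$, exactly as for generic $y$, and the reason $\HH^{K_{-1}}=0$ is that this whole $(q-1)$-dimensional space lies inside $\B$. The paper establishes this by a direct coboundary computation showing each $N\Lambda_i$ is a $\widehat\tau$ when $y_0=-1$. Your heuristic ``$y\in\{0,-1\}$ forces a degeneracy in the $N$-formula'' only accounts for $y=0$; the vanishing at $y=-1$ is of an entirely different nature and needs a separate argument.

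A secondary point concerns the order of invariants for $H_d\cdot K_y$. You propose to work in the orbit-sum basis of $\HH^{H_d}$ and locate the $K_y$-fixed subspace there, but in $H_d\cdot K_y$ it is $K_y$ that is normal, not $H_d$; the $K_y$-generator does not preserve $\HH^{H_d}$, so there is no restricted $K_y$-action to diagonalize. The intersection $\HH^{H_d}\cap\HH^{K_y}$ is of course still the right object, but the paper computes it in the natural order: it first exhibits an explicit basis $\{N\Lambda_i\}_{1\le i\le q-1}$ of $\HH^{K_y}$, and then observes that $H_d$ permutes these basis elements via $i\mapsto hi$ with exactly $d$ orbits.
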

\begin{proof}
The proof will take us the entire subsection, and will be divided in lemmas and corollaries as much as possible.

Note that since the action of $\Atp(F,A)$ we defined on $\C(F,A)$ coincides (by restriction) with the action of $\Aut(F,A)$ defined in \cite{Petr}, the first column of Table 2 directly follows from \cite{Petr}. Thus, let us start with the case $H=K_y$.

For every $y_0\in F$, define on $\C(F,A)$ the operator $S$ (depending on $y_0$) by:

\begin{align*}
S: \C(F,A) &\rightarrow \C(F,A) \\
\theta &\mapsto ^{(1,t_{1,1,y_0})}\theta -\theta
\end{align*}
using the notation of Proposition \ref{proposition};
otherwise put, $S$ is defined for every $\theta\in\C(F,A)$ by
\[
 S\theta (x,y)=\theta(x+1,y+y_0)-\theta(x+1,y_0)-\theta(1,y+y_0)+\theta(1,y_0)-\theta(x,y)
\]

Similarly, define on the space $\Map(F\times F,A)$ of non-normalized cocycles the operator $\widetilde{S}$ by:

\begin{align*}
\widetilde{S}: \Map(F\times F,A) &\rightarrow \Map(F\times F,A) \\
\mu &\mapsto \mu(\cdot +1,\cdot +y_0) -\mu
\end{align*}

i.e. for every $\mu\in\Map(F\times F,A)$:
\[
 \widetilde{S}\mu (x,y)=\mu(x+1,y+y_0)-\mu(x,y)
\]

Like in \cite{Petr}, since $\Inv(K_{y_0})=S^{-1}(\B(F,A))$, we are interested in computing the kernel $\Ker S$ first. In analogy with \cite{Petr}, we are going to prove that it is spanned by these cocycles $\Lambda_i$ that take the value 1 on exactly one orbit of the action on $F^2$ by the translation $(x,y)\mapsto (x+1,y+y_0)$; or rather by their \textit{image} $N(\Lambda_i)$ under $N$ (this is the content of Corollary \ref{c1}).

Namely, for $0\leq i\leq q-1$, define $\Lambda_i\in \Map(F\times F,A)$ by
\[ \Lambda_i(k,ly_0)=\delta_{l-k,i}=\begin{cases}
1 \text{ if } l-k=i \mod q\\
 0 \text{ otherwise}
\end{cases}.
\]

Note that these span $\Ker\widetilde{S}$. Also, 
\[
\Ker S =N(\Ker\widetilde{S}+V) 
\]
where $V$ is some vector space spanned by particular solutions to the systems 
\[
\widetilde{S}\mu=\nu 
\]
for every $\nu$ in a chosen basis of $\Ker N$.

\begin{lemma}
\label{l1}
 For any $y_0\in F$, we can choose $V$ so that $V\subset \Ker N$.
\end{lemma}
\begin{proof}
We have to separate two cases.

Suppose first that $y_0\neq 0$.
For $0\leq i,j\leq q-1$, define $L_i,C_j$ by
\[
 \begin{cases}
  L_i(x,y)=\delta_{x,i}\\
  C_j(x,y)=\delta_{y,j}
 \end{cases}.
\]
Note that these elements of $\Map(F\times F,A)$ are in $\Ker N$; write $1=\sum_i L_i=\sum_j C_j$ for the constant map equalling 1 everywhere.
Now, $\Ker N$ is easily seen to have dimension $2q-1$, with basis for instance 
\[
\{1, L_1, \ldots, L_{q-1}, C_1,\ldots, C_{q-1} \}
\]
or, better,
\[
\{1, L_0-L_1, \ldots, L_{q-2}-L_{q-1}, C_0-C_{y_0},\ldots, C_{(q-2)y_0}-C_{(q-1)y_0} \}.
\]
Therefore, we can choose $L_{i+1}$ (resp. $C_{(j+1)y_0}$), with $0\leq i,j\leq q-2$ as solutions to
\[
 \widetilde{S}\mu=L_i-L_{i+1} \text{  (resp. } C_{jy_0}-C_{(j+1)y_0} \text{)}
\]
and $V$ has dimension at least $2(q-1)$. Let us show that it cannot be more, by showing that the constant map 1 does not have any solution in $\Map(F\times F,A)$.

Indeed, if it were the case, an easy induction for such a solution $\mu$ would imply that for every integer $k\geq 1$
\[
 \mu(k,ky_0)=\mu(0,0)+k.
\]
In particular for $k=q$,
\[
\mu(0,0)=\mu(q,qy_0)=\mu(0,0)+q=\mu(0,0)+1 
\]
This is absurd, so $V$ has dimension $2(q-1)$, and can be chosen to be included in $\Ker N$.

Now, assume $y_0 =0$.
This case is similar, but here no $C_j$ for $0\leq j\leq q-1$ has a solution in $\Map(F\times F,A)$. 
Indeed, were it the case, 
\[
\mu(k,j)=\mu(0,j)+k 
\]
would hold for every integer $k\geq 1$; taking $k=q$, we would have $\mu(q,j)=\mu(0,j)+1 $, absurd.
Thus we can choose $V=\Span_{1\leq i\leq q-1}(L_i)$, and we are done.
\end{proof}

\begin{lemma}
\label{l2}
For any $y_0\neq 0$, $\Ker\widetilde{S}\cap\Ker N=\Span(1)$.
\end{lemma}
\begin{proof}
Suppose we have some $\mu \in\Ker\widetilde{S}\cap\Ker N$. Then for every integers $k,l\geq 1$, we have
\[
 \mu(k+1,(l+1)y_0)=\mu(k+1,0)+\mu(0,(l+1)y_0)-\mu(0,0).
\]
But this is also
\[
 \mu(k,ly_0)=\mu(k,0)+\mu(0,ly_0)-\mu(0,0)
\]
Thus $\mu(k+1,0)-\mu(k,0)$ does not depend on $k$, i.e.
\[
 \mu(k+1,0)=\mu(k,0)+c
\]
for some constant $c\in A$. Then by a quick induction
\[
 \mu(0,0)=\mu(q,0)=\mu(0,0)+qc=\mu(0,0)+c
\]
so $c=0$. Therefore $\mu(k+1,0)=\mu(k,0)$ for all $k$.

Similarly, $\mu(0,(l+1)y_0)=\mu(0,ly_0)$ for all $l$. But then $\mu$ must be constant, and we are done.
\end{proof}

\begin{corollary}
\label{c1}
If $y_0=0$, then $\Ker S=0$. Else, $\Ker S$ has dimension $q-1$ and basis $\{ N \Lambda_i\}_{1\leq i \leq q-1}$.
\end{corollary}
\begin{proof}
This is a direct corollary of Lemmas \ref{l1} and \ref{l2}. 
\end{proof}

The last step is now to compute the intersection $\Ker S \cap B$.

\begin{lemma}
If $y_0=-1$ then $\Ker S \subset B$. Else, $\Ker S \cap B = 0$.
\end{lemma}
\begin{proof}
In this proof, we use $A=\bbZ_2$ without warning. For convenience, we also define $z_0=y_0+1$.

First, if $y_0=-1$, every $\Lambda_i$ is in $B$. Thus, let us suppose $y_0$ is neither 0 nor $-1$, and take some 
\[
 \widehat{\tau} =  \displaystyle\sum_{c\neq 0} \lambda_c \widehat{\tau_c}
\]
that verifies $S\widehat{\tau}=0$, where as in \cite{Petr} we define every $\tau_c$ by
\[
 \tau_c(x)=\delta_{x,c}
\]

Since 
\[
S\widehat{\tau_c}=\begin{cases}
		\widehat{\tau_{c-z_0}}+\widehat{\tau_c} \text{ if } c\neq z_0\\
		\sum_{c'\neq 0,\, c'\neq z_0}\widehat{\tau_c'} \text{ otherwise}
\end{cases}
\]
we have 
\begin{align*}
S \widehat{\tau}= & \sum_{c\neq 0,\, c\neq z_0}\lambda_c (\widehat{\tau_{c-z_0}}+\widehat{\tau_c})\,+\,\lambda_{z_0}\cdot\sum_{c\neq 0,\, c\neq z_0}\widehat{\tau_c} \\
		= & \sum_{c\neq 0,\, c\neq z_0,\, c\neq -z_0}(\lambda_{c+z_0}+\lambda_c+\lambda_{z_0})\widehat{\tau_c} \\
		&+\lambda_{2z_0}\widehat{\tau_{z_0}} +(\lambda_{z_0}+\lambda_{-z_0})\widehat{\tau_{-z_0}}.
\end{align*}
Because the $\tau_c$ for $c\neq 0$ form a basis of $\B(F,A)$, we must conclude that 
\begin{align*}
\lambda_{2z_0}&=0=2\lambda_{z_0}\\
\lambda_{3z_0}&=\lambda_{2z_0}+\lambda_{z_0}=3\lambda_{z_0}\\
&\ldots\\
\lambda_{(q-1)z_0}&=(q-1)\lambda_{z_0}=0\\
\lambda_{-z_0}&=\lambda_{z_0}.
\end{align*}
Thus $\lambda_{z_0}=0$, so $\lambda_{kz_0}=0$ for every $k$, hence $\widehat{\tau}=0$.
\end{proof}

As a quick corollary, we are done for the second column of Table 2, in the case $d=q-1$:
\begin{corollary}$\dim(\Inv(K_{y})/\B(F,A))=q-1$
whenever $y\notin \{0,-1\}$. Moreover, the invariant spaces of $ K_0$, $K_{-1}$ and $\widetilde{K}$ are null mod $\B(F,A)$. 
\end{corollary}
\begin{proof}
 The only case that was not already investigated is $H=\widetilde{K}$, but this is symmetric to the case $H=K_0$.
\end{proof}

Note that any subgroup $H$ in the third column of Table 2 has either $K_0$, $K_{-1}$ or $\widetilde{K}$ as a subgroup. Thus, its invariant space is also null mod $\B(F,A)$.

The only remaining cases in Proposition \ref{p1} are $H=H_d\cdot K_y$, for $y\notin \{0,-1\}$ and $d\neq q-1$.
Start with a cocycle $\theta\in \Span_{1\leq i\leq q-1} (N\Lambda_i)\oplus \B(F,A)$
\[
 \theta=\sum_{i\neq0}\lambda_i N\Lambda_i \,+\, \tau.
\]

Then $^{(h,(0,0))}\theta-\theta \in \B(F,A)$ if and only if 
\[
 \sum_{i\neq0}\lambda_i (N\Lambda_{hi}-N\Lambda_i)\in \B(F,A)
\]
but since the $\Lambda_i$ are linearly independant over $\Ker N$, this is equivallent to
\[
  \sum_{i\neq0}\lambda_i (\Lambda_{hi}-\Lambda_i)=0
\]
i.e.
\[
  \sum_{i\neq0}\Lambda_i (\lambda_{h^{-1}i}-\lambda_i)=0
\]
i.e $\lambda_i=\lambda_{hi}$ for all $i$.
Thus for any $y\notin \{0,-1\}$,
\[
 \dim\Big(\Inv\big(\{(h,(0,0))\}\cup K_{y}\big)/\B(F,A)\Big)=\frac{q-1}{|h|}
\]
and all the cases in Proposition \ref{p1} are covered.
\end{proof}

\subsection{$  | \Inv^\ast(H) |  $, $ H \leq  \Atp(\bbZ_q) $ and $\widetilde{\mathcal{N}}(2q)$}

Before computing the number of nilpotent loops of order $2q$ up to isotopism, we still have to compute the cardinalities of the starred invariant spaces for the subgroups of $\Atp(F,A)$. This is the content of Proposition \ref{l3}.

\begin{proposition}
 \label{l3}
The cardinalities of the starred invariant spaces for the subgroups of $\Atp(F,A)$ are provided in Table 3 below, where as in \cite{Petr}, we define for every integer d:
\[
 \Pred(d)=\{d';\; 1\leq d'<d,\, d/{d'} \text{ is a prime}\}.
\]

\begin{table}[H]
\begin{center}
\begin{tabular}{|l||l|}
\hline
	subgroup $H$ & cardinality $| \Inv^\ast(H) |$ \\ \hline\hline
	$\{1\}$  & $2^{(q-2)(q-1)}+q^2\cdot\displaystyle\sum_{D\subset \Pred(q-1)} (-1)^{|D|} 2^{(q-2)\gcd(D)} $\\
&$-(q-2)\Big(2^{q-1}+q^2\cdot\displaystyle\sum_{D\subset \Pred(q-1)} (-1)^{|D|} 2^{\gcd(D)}  \Big)$\\
&$-(q-3)(q-1)(q+1)$\\ \hline
	$H_d,\, d\notin\{1, q-1\}$ & $2^{(q-2)d}+\displaystyle\sum_{D\subset \Pred(d)} (-1)^{|D|} 2^{(q-2)\gcd(D)} $\\
&$-(q-2)\Big(2^d+\displaystyle\sum_{D\subset \Pred(d)} (-1)^{|D|} 2^{\gcd(D)}  \Big)$ \\ \hline
	$H_1$ & $2^{q-2}-(q-1) $\\ \hline
	$K_y, \, y \notin \{0,-1 \}$ &$ 2^{q-1}+q\cdot\displaystyle\sum_{D\subset \Pred(q-1)} (-1)^{|D|} 2^{\gcd(D)} +q-1$  \\  \hline
	$H_d \cdot K_y, d \notin\{1, q-1\}$ & $2^d+\displaystyle\sum_{D\subset \Pred(d)} (-1)^{|D|} 2^{\gcd(D)}$ \\
$y\notin\{0,-1\}$ & \\  \hline
$H_1\cdot K_y, \, y \notin \{0,-1 \}$&$1$\\ \hline
	$\Atp(F,A)$&1\\ \hline
	other  & 0\\ \hline
\end{tabular}
\end{center}
\caption{Representatives for conjugacy classes of $F=\Atp(\bbZ_q)$ and their starred invariant spaces.}
\end{table}
\end{proposition}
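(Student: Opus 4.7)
The plan is to apply Möbius inversion on the subgroup lattice of $G=\Atp(F,A)$ to convert the dimensions of $\Inv(H)/\B(F,A)$ computed in Proposition~\ref{p1} into the cardinalities $|\Inv^\ast(H)|$. Writing the disjoint decomposition $\Inv(H)=\bigsqcup_{K\supseteq H}\Inv^\ast(K)$, Möbius inversion gives
\[
|\Inv^\ast(H)/\B(F,A)| = \sum_{K\supseteq H}\mu_G(H,K)\,|\Inv(K)/\B(F,A)|,
\]
and by Proposition~\ref{p1} only the overgroups $K$ conjugate to some $H_e$ (with $e\mid q-1$) or to some $H_e\cdot K_y$ (with $e\mid q-1$, $e\neq q-1$, and $y\notin\{0,-1\}$) can contribute anything strictly greater than $1$; every other overgroup contributes exactly $1$ and collapses in the Möbius sum, surviving only at $H=G$ (which is why the penultimate row of Table~3 is $1$).

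First I would enumerate, for each $H$ appearing in Table~3 and not merely up to conjugacy, the full set of overgroups $K\leq G$ of $H$ having $|\Inv(K)/\B|>1$. This demands direct computation inside the semidirect product $\Aut(F)\ltimes F^2$ of Proposition~\ref{proposition}. For example, one checks that $(m^d,0)\in H_e^X$ forces $X=0$, while $(m^d,0)\in H_e^X\cdot K_y$ forces $e\mid d$ together with an alignment condition $X_2=yX_1$. Hence the nontrivial-$\Inv$ overgroups of $H_d$ are the untwisted $H_e$ for $e\mid d$ plus the ``$y$-aligned'' subgroups $H_e^X\cdot K_y$. The factors $q^2$ and $q-2$ in Table~3 are the bookkeeping for these twists: $q^2$ counts the conjugates $H_e^X$ of a fixed $H_e$ (as recorded in Table~1), and $q-2$ counts the admissible values $y\in F\setminus\{0,-1\}$.

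Next I would execute the Möbius inversion on the reduced poset obtained in Step~1. The sub-chain $\{H_e:e\mid d\}$ is isomorphic to the divisor lattice of $d$, whose Möbius function is the number-theoretic $\mu$, so the identity
\[
\sum_{e\mid d,\; d/e\text{ squarefree}}\mu(d/e)\,f(e) = \sum_{D\subseteq\Pred(d)}(-1)^{|D|}\,f(\gcd(D))
\]
is exactly the rewrite that produces the shape of Table~3, with $f(e)=2^{(q-2)e}$ for the pure $H_e$-contributions and $f(e)=2^e$ for the $H_e\cdot K_y$-contributions (the latter multiplied by $q-2$ once summed over the admissible $y$). Each summand and parenthesised block in Table~3 is then a specific piece of this Möbius sum.

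I expect the main obstacle to be Step~1, the enumeration of overgroups, rather than the Möbius arithmetic. Table~1 gives conjugacy-class representatives of $G$, but Möbius inversion lives on the full lattice of actual subgroups above a fixed $H$, including every ``twisted'' conjugate $H_e^X$ or $H_e^X\cdot K_y$ that happens to lie above $H$. Tracking which twists survive -- in particular how $X$ must align with the line $F\cdot(1,y)$ when $H$ already has a nontrivial translation component -- is what drives the more subtle coefficients of Table~3 such as the correction $-(q-3)(q-1)(q+1)$ in the $H=\{1\}$ row, which I expect to arise from overlaps between distinct $H_e^X\cdot K_y$-overgroups (different choices of $y$ sharing common invariant cocycles). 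Once the overgroup set is pinned down in each row, what remains is purely the divisor-lattice Möbius computation already laid out above.
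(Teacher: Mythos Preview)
Your approach is correct and essentially the same as the paper's. The paper phrases the computation as direct inclusion--exclusion via $\Inv^\ast(H) = (\Inv(H)\setminus\{0\}) \setminus \bigcup_K (\Inv(K)\setminus\{0\})$, taken over the minimal overgroups $K$ of $H$ (which it tabulates explicitly), and this is exactly your M\"obius inversion unwound; the key input (Proposition~\ref{p1}) and the main labor (tracking which twisted conjugates $H_e^X$ and $H_e^X\cdot K_y$ sit above each $H$) are identical in both.
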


\begin{proof}
The proof is straightforward, using the following expression, together with Proposition \ref{p1} and a standard inclusion/exclusion argument. 
\begin{align*}
 \Inv^\ast (H)	=&\Inv(H)\setminus \displaystyle\cup_K\Inv(K) \\
		=&(\Inv(H)\setminus \{0\})\setminus (\displaystyle\cup_K\Inv(K)\setminus \{0\})
\end{align*}
where the union is taken for subgroups $K$ such that $H$ is a maximal subgroup of $K$;
Table \ref{t4} below provides for each subgroup $H$ the subgroups $K$ in which $H$ is maximal.

\begin{table}[H]
\label{t4}
\begin{center}
\begin{tabular}{|l||l|}
\hline
	subgroup $H$ & subgroups $K$ in which $H$ is maximal \\ \hline\hline
	$\{1\}$  & every $H_d^X$ for $d\in \Pred(q-1)$, any $X$ \\
		& every $K_y$ for $y\in \{1,\ldots ,q-2\}$\\ \hline
	$H_d,\, d\neq q-1$ & every $H_{d'}$ for $d'\in \Pred(d)$, \\
		& every $H_d\cdot K_y$ for $y\in \{1,\ldots ,q-2\}$\\ \hline
	$K_y, \,y\in \{1,\ldots ,q-2\}$ &every $H_d^X\cdot K_y$ for $d\in \Pred(q-1)$ and \\
					& $X\in \{ (0,0),\ldots (q-1,0)\}$\\  \hline
	$H_d \cdot K_y,\,y\in \{1,\ldots ,q-2\}$ & every $H_{d'}\cdot K_y$ for $d'\in \Pred(d)$ and \\& $y\in \{1,\ldots ,q-2\}$\\  \hline
\end{tabular}
\end{center}
\caption{Representatives for conjugacy classes of $F=\Atp(\bbZ_q)$ and the non-null invariant-space subgroups in which they are maximal.}
\end{table}

Details are left to the reader. 
\end{proof}

For convenience, let us write  $ \widetilde{\mathcal{N}}(n)$ for the number of nilpotent loops of order $n$ counted up to isotopism, and  $\mathcal{N}(n)$ the number of nilpotent loops of order $n$ counted up to isomorphism. This notation is consistent with the one in \cite{Petr}, and we recall the following:

\begin{theorem}
 Let $q$ be an odd prime. Then the number $\mathcal{N}(2q)$ of nilpotent loops of order $2q$ counted up to isomorphism is
\[
 \mathcal{N}(2q)=\displaystyle\sum_{d \text{ divides } q-1} \frac{1}{d} \Bigg(2^{(q-2)d}+\displaystyle\sum_{D\subset \Pred(d)} (-1)^{|D|} 2^{(q-2)\gcd(D)} \Bigg).
\]
\end{theorem}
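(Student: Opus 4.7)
This theorem is a recollection of the main enumeration result of \cite{Petr}, so the plan is to retrace its derivation by applying the \emph{isomorphism} analogue of Theorem \ref{theonum} in the specific setting $F = \bbZ_q$, $A = \bbZ_2$. The isomorphism version is obtained by substituting $\Aut(F,A) = \Aut(F) \times \Aut(A)$ for $\Atp(F,A)$ throughout the machinery of Sections \ref{action}--\ref{inv} (using Lemma \ref{isomorphic} in place of Lemma \ref{isotopic}, and the isomorphism-separability results of \cite{Petr} in place of Theorem \ref{theorem}). Since $\Aut(\bbZ_2)$ is trivial, $\Aut(F,A) \cong \bbZ_{q-1}$ is cyclic, so its subgroups are exactly the $H_d$ for $d \mid q-1$ in the notation of Subsection \ref{SS}, each of order $(q-1)/d$. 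Being abelian, every $H_d$ is self-normalizing in the sense that $[N_G(H_d):H_d] = d$.

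The next step is to assemble the numerical inputs. First, $|\B(F,A)| = 2^{q-1}$ since $\Map_0(\bbZ_q, \bbZ_2)$ has $\bbZ_2$-dimension $q-1$ and $\Hom(\bbZ_q, \bbZ_2) = 0$ (as $q$ is odd). Second, the dimensions $\dim(\Inv(H_d)/\B(F,A)) = (q-2)d$ are precisely the ones recorded in the first column of Table 2, which, as the paper observes, are inherited from \cite{Petr} because the action of $\Aut(F,A)$ on $\C(F,A)$ coincides with the restriction of the action of $\Atp(F,A)$. To extract $\Inv^\ast(H_d)$ from the $\Inv(H_{d'})$ one exploits two facts about the cyclic lattice of subgroups of $\bbZ_{q-1}$: the maximal overgroups of $H_d$ are exactly the $H_{d'}$ with $d' \in \Pred(d)$, and $\langle H_{d_1}, \ldots, H_{d_k}\rangle = H_{\gcd(d_1, \ldots, d_k)}$. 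A standard inclusion-exclusion on the union of the $\Inv(H_{d'})$ then yields
\[
\frac{|\Inv^\ast(H_d)|}{|\B(F,A)|} = 2^{(q-2)d} + \sum_{\emptyset \neq D \subseteq \Pred(d)} (-1)^{|D|}\,2^{(q-2)\gcd(D)}.
\]

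Summing $|\Inv^\ast(H_d)|/(|\B(F,A)| \cdot d)$ over all divisors $d$ of $q-1$, as prescribed by the isomorphism version of Theorem \ref{theonum}, delivers the stated formula for $\mathcal{N}(2q)$. The only substantive input is the dimension computation $\dim(\Inv(H_d)/\B(F,A)) = (q-2)d$, borrowed from \cite{Petr}; the remainder is bookkeeping via Möbius inversion on the divisor lattice and the observation that conjugacy in an abelian group is trivial, so each $H_d$ forms its own class in $\Sub_c(\Aut(F,A))$.
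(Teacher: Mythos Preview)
The paper's own proof is simply a citation to \cite{Petr}, Theorem 7.1, and your proposal is a correct reconstruction of that argument in the present notation. One terminological slip: in an abelian group every subgroup has normalizer equal to the whole group, so ``self-normalizing'' is the wrong word---what you mean (and use correctly) is that $N_G(H_d)=G$ and hence $[N_G(H_d):H_d]=[G:H_d]=d$.
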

\begin{proof}
See \cite{Petr}, Theorem 7.1.
\end{proof}

We have now all ingredients in hand for Theorem \ref{lastthm}.

\begin{theorem}
\label{lastthm}
Let $q$ be an odd prime. Then the number $ \widetilde{\mathcal{N}}(2q)$ of nilpotent loops of order $2q$ counted up to isotopism is
\begin{align*}
  \widetilde{\mathcal{N}}(2q)=	& \frac{2^{(q-2)(q-1)}}{q^2(q-1)}+ \frac{1}{q-1} \displaystyle\sum_{D\subset \Pred(q-1)} (-1)^{|D|} 2^{(q-2)\gcd(D)} \\
				& +\displaystyle\sum_{d \text{ strictly divides } q-1} \frac{1}{d} \Bigg(2^{(q-2)d}+\displaystyle\sum_{D\subset \Pred(d)} (-1)^{|D|} 2^{(q-2)\gcd(D)} \Bigg) \\
				& +\frac{1}{q^2}\Big((q-2)2^{q-1}+3)\Big)  \\
= & \mathcal{N}(2q) + \frac{1}{q^2}\big(-(q+1)2^{(q-2)(q-1)}+(q-2)2^{q-1}+3\big) .
\end{align*}

\end{theorem}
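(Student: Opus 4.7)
The plan is to apply the counting formula of Theorem \ref{theonum} to the explicit data of Tables 1 and 3. Since $A=\bbZ_2$ has trivial automorphism group, $G=\Atp(F,A)$ coincides with $\Atp(F)$; and since $\Hom(\bbZ_q,\bbZ_2)=0$, we have $|\B(F,A)|=2^{q-1}$. Every cocycle is separable by the separability results of Section \ref{sep} (the case $|Q|=pq$), so Theorem \ref{theonum} applies directly.

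Combining Tables 1 and 3, the conjugacy classes with $|\Inv^\ast(H)|\neq 0$ fall into seven families: the trivial subgroup $\{1\}$ (normalizer index $q^2(q-1)$); each $H_d$ for $d\notin\{1,q-1\}$ (index $d$); $H_1$ (index $1$); the $q-2$ classes $K_y$ with $y\notin\{0,-1\}$ (index $q(q-1)$ each); for each $d\notin\{1,q-1\}$, the $q-2$ classes $H_d\cdot K_y$ with $y\notin\{0,-1\}$ (index $d$ each); the $q-2$ classes $H_1\cdot K_y$ (index $1$); and $\Atp(F,A)$ itself (index $1$). Write $S_d=\sum_{D\subset\Pred(d)}(-1)^{|D|}2^{(q-2)\gcd(D)}$ and $T_d=\sum_{D\subset\Pred(d)}(-1)^{|D|}2^{\gcd(D)}$. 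I would split the resulting sum into a \emph{main part} matching the first three summands of the target formula and a \emph{correction}. The main part collects $\frac{2^{(q-2)(q-1)}+q^2 S_{q-1}}{q^2(q-1)}$ from the leading terms of $|\Inv^\ast(\{1\})|$ and $\frac{1}{d}(2^{(q-2)d}+S_d)$ for each strict divisor $d$ of $q-1$ from the leading terms of $|\Inv^\ast(H_d)|$ (with the $2^{q-2}$ piece of $|\Inv^\ast(H_1)|=2^{q-2}-(q-1)$ playing the role of the $d=1$ instance).

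The heart of the argument is a pair of cancellations inside the correction. For each $d\notin\{1,q-1\}$, the contribution $-(q-2)(2^d+T_d)/d$ from $H_d$ cancels against $+(q-2)(2^d+T_d)/d$ aggregated over the $q-2$ conjugates $H_d\cdot K_y$; and the $T_{q-1}$ term in $|\Inv^\ast(\{1\})|$ (divided by $q^2(q-1)$) cancels against the aggregate $T_{q-1}$ term from the $q-2$ classes $K_y$ (each divided by $q(q-1)$). What remains is a purely numerical combination: the $2^{q-1}$ pieces combine via $\frac{1}{q}-\frac{1}{q^2}=\frac{q-1}{q^2}$ to $\frac{(q-2)2^{q-1}}{q^2}$, and the residual integer contributions $-\frac{(q-3)(q+1)}{q^2}$, $-(q-1)$, $\frac{q-2}{q}$, $q-2$, $1$ combine over common denominator $q^2$ to numerator exactly $3$. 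Thus the correction equals $\bigl((q-2)2^{q-1}+3\bigr)/q^2$, which is the fourth summand of the target formula.

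For the second form, subtracting $\mathcal{N}(2q)=\frac{1}{q-1}\bigl(2^{(q-2)(q-1)}+S_{q-1}\bigr)+\sum_{d\,\text{strictly divides}\,q-1}\frac{1}{d}\bigl(2^{(q-2)d}+S_d\bigr)$ annihilates both the $S_{q-1}$ and the strict-divisor sums, leaving $\frac{2^{(q-2)(q-1)}}{q^2(q-1)}-\frac{2^{(q-2)(q-1)}}{q-1}+\frac{(q-2)2^{q-1}+3}{q^2}$; the identity $\frac{1}{q^2(q-1)}-\frac{1}{q-1}=-\frac{q+1}{q^2}$ then finishes. The main obstacle is purely bookkeeping: tracking multiplicities (especially the factor $q-2$ in the $K_y$ and $H_d\cdot K_y$ families) and verifying that the $T_{q-1}$ coefficients align correctly, the success of the $\{1\}$-versus-$K_y$ cancellation hinging on the $q^2$-versus-$q$ ratio being offset by the index ratio $q(q-1):q^2(q-1)$. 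Once these are checked, the residual algebra is routine.
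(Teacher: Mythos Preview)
Your approach is exactly the paper's: the paper's proof is the one-liner ``Combine Theorem 5.5 and Proposition 6.8,'' and you have carried out that combination in detail, with the correct bookkeeping of the $K_y$ and $H_d\cdot K_y$ multiplicities and the $T_d$-cancellations. One small remark: you note $|\B(F,A)|=2^{q-1}$ but then silently drop it from the denominators---this is consistent with how the paper uses Table~3 (its entries are already sizes in $\HH(F,A)$, as the values $|\Inv^\ast(\Atp(F,A))|=1$ and $|\Inv^\ast(H_1\cdot K_y)|=1$ confirm), so your computation is correct, but it would be worth saying so explicitly.
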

\begin{proof}
Combine Theorem 5.5 and Proposition 6.8.
\end{proof}

Recall from \cite{Petr} the following theorem.
\begin{theorem}
 \label{lastpetr}

 Let $q$ be an odd prime. Then the number of nilpotent loops of order $2q$ counted up to isomorphism is approximately $2^{(q-2)(q-1)}/{(q-1)}$. More precisely,

\[  \displaystyle\lim_{q \text{ prime, } q\to \infty}\mathcal{N}(2q)\cdot \frac{q-1}{2^{(q-2)(q-1)}}=1.
\]

\end{theorem}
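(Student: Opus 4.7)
The plan is to isolate the single dominant summand in the closed form for $\mathcal{N}(2q)$ given above and to show that everything else is exponentially negligible relative to it. Viewed as a double sum over divisors $d \mid q-1$ and over subsets $D\subset\Pred(d)$, every summand has the shape $\pm 2^{(q-2)e}/d$, where the exponent $e$ lies in $\{d\}\cup\{\gcd(D):D\subset\Pred(d)\}$. The maximal exponent $(q-2)(q-1)$ is attained in only one place, namely the leading term $2^{(q-2)d}$ at $d=q-1$, which contributes exactly $\frac{2^{(q-2)(q-1)}}{q-1}$. My first step is therefore to write
\[
\mathcal{N}(2q) = \frac{2^{(q-2)(q-1)}}{q-1} + R(q),
\]
where $R(q)$ collects every remaining summand.

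Next I would bound $R(q)$ uniformly in $q$ from two elementary observations. Any proper divisor $d$ of $q-1$ satisfies $d \leq (q-1)/2$, and for any nonempty $D\subset\Pred(d)$ each element of $D$ is itself a proper divisor of $d$, so $\gcd(D)\leq d/2\leq (q-1)/2$. Thus every exponent appearing inside $R(q)$ is at most $(q-2)(q-1)/2$, and each individual summand has absolute value at most $2^{(q-2)(q-1)/2}$. The number of summands is bounded by the number of divisors of $q-1$ times $2$ raised to the number of distinct prime divisors of $q-1$, a quantity that grows subpolynomially in $q$; in particular it is $(q-1)^{o(1)}$ by classical divisor estimates.

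Combining these facts yields $|R(q)|\leq (q-1)^{o(1)}\cdot 2^{(q-2)(q-1)/2}$. Multiplying the decomposition of $\mathcal{N}(2q)$ by $\frac{q-1}{2^{(q-2)(q-1)}}$ then gives
\[
\mathcal{N}(2q)\cdot\frac{q-1}{2^{(q-2)(q-1)}} = 1 + O\!\Big((q-1)^{1+o(1)}\cdot 2^{-(q-2)(q-1)/2}\Big),
\]
and the error term tends to $0$ as $q\to\infty$ because exponential decay overwhelms any polynomial growth. The only real obstacle is bookkeeping inside the inclusion--exclusion: once one checks the elementary fact that $\gcd(D)<d$ for every nonempty $D\subset\Pred(d)$, the exponent $(q-2)(q-1)$ is achieved by a unique summand, and the remainder of the argument reduces to a routine comparison of exponentials against polynomial factors.
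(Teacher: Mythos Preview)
Your argument is correct. The paper does not actually prove this statement; it simply cites \cite{Petr}, Theorem~7.3. You instead supply a direct asymptotic analysis of the closed formula for $\mathcal{N}(2q)$ recalled just above: you isolate the unique summand with exponent $(q-2)(q-1)$, bound every other exponent by $(q-2)(q-1)/2$ via the elementary observation that proper divisors of $d$ are at most $d/2$, and control the number of summands. Two minor remarks. First, invoking the $n^{o(1)}$ growth of the divisor function is more than you need; any polynomial (indeed any subexponential) bound on the number of terms is already annihilated by the factor $2^{-(q-2)(q-1)/2}$, so a cruder count such as $\tau(q-1)\cdot 2^{\omega(q-1)}\le (q-1)^2$ would do. Second, you should make explicit that the inner sum over $D\subset\Pred(d)$ runs over nonempty subsets (as the inclusion--exclusion origin of the formula dictates), since your inequality $\gcd(D)\le d/2$ requires $D\neq\emptyset$; under any reasonable convention the $D=\emptyset$ contribution is $O(1)$ and does not affect the conclusion.
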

\begin{proof}
See \cite{Petr}, Theorem 7.3.
\end{proof}

We can now compare the estimates for $\mathcal{N}(2q)$ and $\widetilde{\mathcal{N}}(2q)$, this is the purpose of the following corollary.

\begin{corollary}
 Let $q$ be an odd prime. Then the number of nilpotent loops of order $2q$ counted up to isotopism is approximately $2^{(q-2)(q-1)}/{q^2(q-1)}$. Thus, the ratio between the number of such loops counted up to isomorphism and up to isotopism is approximately $q^2$. More precisely,

\begin{align*}
  \displaystyle\lim_{q \text{ prime, } q\to \infty} \widetilde{\mathcal{N}}(2q)\cdot \frac{q^2(q-1)}{2^{(q-2)(q-1)}}=1, \\
 \displaystyle\lim_{q \text{ prime, } q\to \infty} \frac{\mathcal{N}(2q)}{q^2\cdot\widetilde{\mathcal{N}}(2q)}=1 .
\end{align*}
\end{corollary}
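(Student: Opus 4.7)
The plan is to deduce both limits directly from Theorem \ref{lastthm} and Theorem \ref{lastpetr} by elementary manipulation. The heart of the computation is the arithmetic identity
\[
\frac{1}{q-1} - \frac{q+1}{q^2} = \frac{q^2 - (q+1)(q-1)}{q^2(q-1)} = \frac{1}{q^2(q-1)},
\]
which is what makes the cancellation between the leading term of $\mathcal{N}(2q)$ and the correction term in Theorem \ref{lastthm} work out so cleanly.

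First I would start from the closed form
\[
\widetilde{\mathcal{N}}(2q) = \mathcal{N}(2q) + \frac{1}{q^2}\bigl(-(q+1)2^{(q-2)(q-1)}+(q-2)2^{q-1}+3\bigr)
\]
given by Theorem \ref{lastthm}, and multiply both sides by $q^2(q-1)/2^{(q-2)(q-1)}$. The two ``junk'' terms $(q-2)2^{q-1}$ and $3$ are negligible compared to $2^{(q-2)(q-1)}$ as $q\to\infty$ (their contributions to the rescaled expression tend to $0$), so the limit reduces to evaluating
\[
\lim_{q\to\infty} \Bigl( \mathcal{N}(2q)\cdot\frac{q^2(q-1)}{2^{(q-2)(q-1)}} - (q+1)(q-1)\Bigr).
\]
Applying Theorem \ref{lastpetr}, which says $\mathcal{N}(2q)\cdot\frac{q-1}{2^{(q-2)(q-1)}}\to 1$, the first piece behaves like $q^2$, so the difference tends to $q^2 - (q^2-1) = 1$. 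This establishes the first limit
\[
\lim_{q\to\infty}\widetilde{\mathcal{N}}(2q)\cdot\frac{q^2(q-1)}{2^{(q-2)(q-1)}}=1.
\]

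For the second limit, I would simply divide the two asymptotic equivalences. Write
\[
\frac{\mathcal{N}(2q)}{q^2\,\widetilde{\mathcal{N}}(2q)} = \frac{\mathcal{N}(2q)\cdot(q-1)/2^{(q-2)(q-1)}}{\widetilde{\mathcal{N}}(2q)\cdot q^2(q-1)/2^{(q-2)(q-1)}},
\]
and observe that both numerator and denominator tend to $1$ by Theorem \ref{lastpetr} and the first part of the corollary respectively. Hence the ratio tends to $1$.

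The only step that requires any care is making the asymptotic expansion precise enough: we need Theorem \ref{lastpetr} in the form $\mathcal{N}(2q)\cdot\frac{q-1}{2^{(q-2)(q-1)}}=1+o(1)$, and then we must verify that multiplying the $o(1)$ error by $q^2$ still yields $o(1)$ after cancelling the leading $q^2$ with $-(q+1)(q-1)=-q^2+1$. In fact $\mathcal{N}(2q)\cdot\frac{q-1}{2^{(q-2)(q-1)}}-1$ is actually exponentially small in $q$ (as one reads off the explicit formula in Theorem 7.1 of \cite{Petr}), so the multiplication by $q^2$ causes no difficulty whatsoever. No additional machinery beyond Theorems \ref{lastthm} and \ref{lastpetr} is needed.
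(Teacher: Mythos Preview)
Your proof is correct and follows the same approach as the paper, which simply states that the corollary is immediate from Theorems \ref{lastthm} and \ref{lastpetr}. One small remark: you work from the second identity in Theorem \ref{lastthm} (the one expressing $\widetilde{\mathcal{N}}(2q)$ as $\mathcal{N}(2q)$ plus a correction), which forces you to track the rate of convergence in Theorem \ref{lastpetr} carefully, as you correctly note; had you instead used the first identity in Theorem \ref{lastthm}, where the leading term $2^{(q-2)(q-1)}/\bigl(q^2(q-1)\bigr)$ is already isolated and every remaining term is visibly $O\bigl(2^{(q-2)(q-1)/2}\bigr)$, the first limit would follow without any appeal to error rates.
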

\begin{proof}
 This is immediate from Theorems \ref{lastthm} and \ref{lastpetr}.
\end{proof}

Table \ref{tablelast} below provides $\widetilde{\mathcal{N}}(2q)$ for any odd prime $q\leq 17$. Like in \cite{Petr}, it is not a problem to compute $\widetilde{\mathcal{N}}(2q)$ for bigger primes, but this would not fit nicely in a table.

\begin{table}[H]
\label{tablelast}
\begin{center}
\begin{tiny}
\begin{tabular}{l|l}
$q$ & $\widetilde{\mathcal{N}}(2q)$ \\ \hline 
$3$ & $\np{2}$ \\ 
$5$ & $\np{63}$ \\ 
$7$ & $\np{3658003}$ \\ 
$11$ & $\np{1023090941561683953759579}$ \\ 
$13$ & $\np{2684673506279593406254437209960379083}$ \\ 
$17$ & $\np{382103603974564085117495134243710834769544696954218618882023686506659}$ \\ 
\end{tabular}
\end{tiny}
\end{center}
\caption{Number $\widetilde{\mathcal{N}}(2q)$ of nilpotent loops of order $2q$ up to isotopism, for odd primes $q\leq 17$.}
\end{table}

\section{Conclusion}
\label{concl}
We invite the reader desiring to know about related works and topics to check Section 10 in \cite{Petr}.

Note that in the present paper we did not compute the number of nilpotent loops of small order (say less that 24) up to isotopy.
Undertaking such counting appears of interest to us.
Possible trouble could be the isotopy non-invariance of the set of large center cocycles (see Section 8 in \cite{Petr}), since isotopy does not preserve centers.

Also of interest is the enumeration of nilpotent loops of small order in Bol-Moufang varieties (see \cite{BolMouf}) up to isomorphy, and up to isotopy (here also, isotopy invariance should be a concern).

The computation of Table \ref{tablelast} was undertaken using the GAP System for Computational Discrete Algebra (see http://www.gap-system.org/).
This paper comes with the code used for Table \ref{tablelast} and a file containing the numbers $\widetilde{\mathcal{N}}(2q)$ of nilpotent loops of order $2q$ for every odd prime $q$ less than 100.
The two files can be downloaded at http://www.math.cornell.edu/\string~lpc49/.

\bibliography{biblio} 
\bibliographystyle{alpha}

\end{document}